\theoremstyle{plain}
\newtheorem{thm}{Theorem}[section]
\newtheorem{lem}[thm]{Lemma}
\theoremstyle{definition}
\newtheorem{defn}[thm]{Definition}
\newtheorem{rem}[thm]{Remark}
\newtheorem{cor}[thm]{Corollary}
\numberwithin{equation}{section}
\newcommand{\R}{\mathbb{R}}
\renewcommand{\a}{\alpha}
\renewcommand{\c}{\gamma}
\newcommand{\e}{\varepsilon}
\renewcommand{\d}{\delta}
\newcommand{\F}{\mathcal{F}}
\newenvironment{dedication}{\begin{quotation}\small\begin{em}}   {\par\end{em}\end{quotation}\vspace{1em}}
\begin{document}
\title[BVPs for functional differential equations]{Nontrivial solutions of boundary value problems for second order functional differential equations}
\date{}


\subjclass[2010]{Primary 34K10, secondary 34B10, 34B18, 47H10}%
\keywords{Fixed point index, affine cone, nontrivial solution, retarded functional differential equation, nonlocal boundary condition.}%

\author[A. Calamai]{Alessandro Calamai}
\address{Alessandro Calamai, 
Dipartimento di Ingegneria Industriale e Scienze Matematiche,
Universit\`{a} Politecnica delle Marche
Via Brecce Bianche
I-60131 Ancona, Italy}%
\email{calamai@dipmat.univpm.it}%

\author[G. Infante]{Gennaro Infante}
\address{Gennaro Infante, Dipartimento di Matematica e Informatica, Universit\`{a} della
Calabria, 87036 Arcavacata di Rende, Cosenza, Italy}%
\email{gennaro.infante@unical.it}%

\begin{abstract}
In this paper we present a theory for the existence of multiple nontrivial solutions for a class of perturbed Hammerstein integral equations. Our methodology, rather than to work directly in cones, is to utilize the theory of fixed point index on affine cones.  This approach is fairly general and covers a class of nonlocal boundary value problems for functional differential equations. Some examples are given in order to illustrate our theoretical results.
\end{abstract}

\maketitle

\begin{dedication}
\begin{center}
Dedicated to Massimo Furi, Professor Emeritus at the University of Florence
\end{center}
\end{dedication}

\section{Introduction}
The problem of the existence of solutions for functional differential equations (FDEs) has been discussed by a large number of researchers. A survey of classical and recent results in this topic goes beyond the scopes of this manuscript; we refer the reader to the books by Hale and Lunel~\cite{halelunel}, Erbe and co-authors~\cite{ekz-book}, 
Agarwal et al.~\cite{abbd-book}, the survey by Ntouyas~\cite{sotiris-fde}, the papers by Nussbaum~\cite{Nuss}, Xu and Liz~\cite{xuliz}, Ntouyas, Sficas and Tsamatos~\cite{nst93}, and references therein. The motivation for these studies, apart from a purely mathematical interest, relies in the fact that these type of equations arise quite frequently when modelling physical problems, see for example the ones illustrated in~\cite{halelunel}. Regarding  the existence of positive solutions,
fixed point techniques  in cones have been used, for example,
by Wang~\cite{wang04} (in the first order case) and by Erbe and Kong~\cite{ek94},
Karakostas, Mavridis and Tsamatos~\cite{kmt03} and Ma~\cite{ma07} in the second order case
under \emph{local} boundary conditions. The existence of positive solutions in the \emph{nonlocal} case has been studied by Karakostas, Mavridis and Tsamatos~\cite{kmt04}
and, more recently, by Karaca~\cite{karaca13}. 

In particular, in the paper~\cite{kmt04}, the authors study the existence of positive solutions of the functional boundary value problem (FBVP)
\begin{equation}\label{kmt-eq}
u''(t)+F(t,u_t)=0,\ t \in [0,1],
\end{equation}
with initial conditions
\begin{equation}\label{kmt-i}
u(t)=\psi(t),\ t \in [-r,0],
\end{equation}
and boundary conditions (BCs)
\begin{equation}\label{kmt-bcs}
u(0)=0,\;  u(1)=\int_{t_1}^{t_2}u(s)dA(s),\; {t_1,\,t_2}\in (0,1),
\end{equation}
where $\psi$ is assumed to be non-negative and the above integral is meant in the Riemann-Stieltjes sense and is given by a \emph{positive} measure.

The methodology in~\cite{kmt04} is to re-write the FBVP~\eqref{kmt-eq}-\eqref{kmt-i}-\eqref{kmt-bcs} as an Hammerstein-type integral equation of the form
$$
u(t)=\int_{0}^{1} \hat{k}(t,s)F(s,u_s)\,ds,
$$
with a suitable kernel $\hat{k}$ and to use the Leggett-Williams theorem~\cite{lw79}.

Our approach is somewhat different and we study, in the spirit of the paper by  Infante and Webb in~\cite{gijwems}, the existence of nontrivial solutions
of perturbed Hammerstein integral equations of the type 
\begin{equation}\label{eqhamm-intro}
u(t)=\psi (t) + \int_{0}^{1} k(t,s)g(s)F(s,u_s)\,ds + \gamma(t) \alpha[u],
\end{equation}
where $\alpha[\cdot]$ is a linear functional given by a Stieltjes integral, namely
\begin{equation}\label{nbc}
\alpha[u]=\int_0^1 u(s)\,dA(s).
\end{equation}
Here by a \emph{nontrivial solution} of \eqref{eqhamm-intro} we mean a solution that does not coincide with $\psi$; furthermore we stress that the solutions that we obtain are positive on a subinterval $[a,b]$ of $[0,1]$ and are allowed to \emph{change sign} in $[0,1]$.

We point out that the formulation \eqref{nbc} involves a \emph{signed} measure and covers the case of multi-point and integral conditions, namely
$$
\alpha[u]=\sum_{j=1}^m\a_j u(\eta_j)\quad\text{or}\quad \alpha[u]=\int_{0}^1\varphi(s)u(s)ds.
$$
Multi-point and integral BCs are widely studied objects in the case of ODEs. As far as we know multi-point BCs were investigated for the first time in 1908 by Picone~\cite{Picone}. In 1942 Whyburn~\cite{Whyburn} wrote a review on differential equations with general BCs that included also integral BCs involving Stieltjes measures. We mention also the (more recent) reviews of Conti~\cite{Conti}, Ma~\cite{rma}, Ntouyas~\cite{sotiris} and \v{S}tikonas~\cite{Stik} and the papers by Karakostas and Tsamatos~\cite{kttmna, ktejde} and by Webb and Infante~\cite{jwgi-lms}.

One advantage of studying the solutions of the perturbed integral equation~\eqref{eqhamm-intro} is that it provides a fairly general setting that covers, \emph{as special cases}, a number of FBVPs subject to nonlocal conditions. 

A new feature of the present paper is that we work in \emph{affine cones}.
In fact, due to the presence of the delay and of the initial datum $\psi$, in order
to investigate the solutions of the integral equation~\eqref{eqhamm-intro}
we find it convenient and natural to work in translates of cones in Banach spaces,
rather than to work directly in cones.
In order to do this, we provide a modification, tailored for our setting,
of some classical result on the fixed point index.

As already pointed out, one benefit
of providing an existence theory for the perturbed integral equation~\eqref{eqhamm-intro} is that it gives results for a (relatively) large class of FBVPs. As an example, we illustrate here the applicability of our results to the nonlocal FBVP
\begin{equation}\label{eq-ther}
-u''(t)=g(t) F(t,u_t),\ t \in [0,1],
\end{equation}
with initial conditions
\begin{equation}\label{eqic-ther}
u(t)=\psi(t),\ t \in [-r,0]
\end{equation}
and BCs
\begin{equation}\label{eqbc-ther}
u(0)=0,\; \beta u'(1) + u(\eta)=\a[u],\; {\beta}>0,\; {\eta}\in (0,1).
\end{equation}
The FBVP~\eqref{eq-ther}-\eqref{eqic-ther}-\eqref{eqbc-ther} can be seen as a retarded analogous of some thermostat problems with nonlocal controllers studied, in the case of ODEs, by Infante and Webb~\cite{gijwnodea, gijwems}, who were motivated by earlier work of Guidottti and Merino~\cite{guimer}.
Thermostat problems of this type have been studied by a number of authors,
more information can be found in the recent papers \cite{gi-pp-ft, jw-narwa} and references therein.

We describe the applicability of our theory in the special case of a delay differential equation,
that is,
\[
-u''(t)=g(t) f(t,u(t),u(t-r)),\ t \in [0,1].
\]
As an application, we discuss the existence of nontrivial solutions of the 
delay equation
\begin{equation}\label{eqdelay1-intro}
-u''(t)= \lambda \, |u(t)|^{p-1} |u(t-r)|,\ t \in [0,1],
\end{equation}
where $p \geq 1$,
with the initial conditions \eqref{eqic-ther}
and some nonlocal BCs.

Finally, we illustrate how our approach can be applied to the case of positive solutions; this is done for the FBVP~\eqref{eq-ther}-\eqref{eqic-ther}-\eqref{eqbc-ther},
 and also for the FBVP~\eqref{kmt-eq}-\eqref{kmt-i}-\eqref{kmt-bcs},  complementing the results of~\cite{kmt04,jw-na05}.

\section{Fixed points on translates of a cone}\label{translates}
In this Section we provide some useful properties of the fixed point index on a translate of a cone $K$, in the spirit of Remark 1 of \cite{amannjfa}. These properties are used in Section~\ref{sec-changesign} to prove our existence and multiplicity results for the integral equation~\eqref{eqhamm-intro}.

Let $X$ be a Banach Space. A \emph{cone} on $X$ is a closed,
convex subset of $X$ such that $\lambda \, x\in K$ for $x \in K$ and
$\lambda\geq 0$ and $K\cap (-K)=\{0\}$.
If $\Omega$ is a bounded open subset of $K$ (in the relative
topology) we denote by $\overline{\Omega}$ and $\partial \Omega$
the closure and the boundary of $\Omega$ relative to $K$.
Given $y\in X$, we can consider the \emph{translate} of a cone $K$, namely
$$
K_y:=y+K=\{y+x: x\in K\}.
$$
When $D$ is an open
bounded subset of $X$ we write $D_{K_y}=D \cap K_y$, an open subset of $K_y$.

Observe that translates of cones are examples of
\emph{absolute neighborhood retracts} (ANRs).
So the classical fixed point index theory for
compact maps on cones (see e.g.\ \cite{amannjfa, amann,  guolak})
can be extended to the context of translates of cones. 
The fixed point index satisfies properties analogous to those of the
classical Leray-Schauder degree.
The reader can see for
instance \cite{Br, DuGr, Nu93} for a
comprehensive presentation of the index theory for ANRs.

The proof of the following Lemma can be carried out as in the case of cones, see for example the proof of Lemma 12.1
in the review \cite{amann}. We give here an explicit proof
for the sake of completeness;
for more details, see also the recent paper \cite{djeb2014}.

\begin{lem} \label{lemind}
Let $D$ be an open bounded set with $y \in D$.
Assume that $\F:\overline{D}_{K_y}\to K_y$ is
a compact map such that $x\neq \F x$ for $x\in \partial D_{K_y}$. Then
the fixed point index $i_{K_y}(\F, D_{K_y})$ has the following properties.
\begin{itemize}
\item[(1)] If there
exists $e\in K\setminus \{0\}$ such that $x\neq \F x+\sigma e$ for
all $x\in \partial D_{K_y}$ and all $\sigma >0$, then $i_{K_y}(\F, D_{K_y})=0$.
\item[(2)] If 
$\mu (x-y) \neq \F x-y$
for all $x\in
\partial D_{K_y}$ and for every $\mu \geq 1$, then $i_{K_y}(\F, D_{K_y})=1$.
\item[(3)] Let $D'$ be open in $X$ with
$\overline{D'}\subset D_{K_y}$. If $i_{K_y}(\F, D_{K_y})=1$ and
$i_{K_y}(\F, D'_{K_y})=0$, then $\F$ has a fixed point in
$D_{K_y}\setminus \overline{D'}_{K_y}$. The same result holds if
$i_{K_y}(\F, D_{K_y})=0$ and $i_{K_y}(\F, D'_{K_y})=1$.
\end{itemize}{}
\end{lem}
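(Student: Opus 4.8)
The plan is to establish the three properties by transporting the statement to an ordinary cone via the translation homeomorphism, and then invoking the classical fixed point index theory for cones (as in Lemma 12.1 of \cite{amann}). First I would introduce the map $T:K_y\to K$ defined by $T(z)=z-y$, which is an affine homeomorphism carrying the translate $K_y$ onto the cone $K$. Setting $\widetilde{D}=T(D_{K_y})=\{x\in K:\, x+y\in D\}$ and defining $\widetilde{\F}:\overline{\widetilde{D}}\to K$ by $\widetilde{\F}(x)=\F(x+y)-y$, one checks that $\widetilde{\F}$ is a compact self-map of the cone $K$ and that $x$ is a fixed point of $\widetilde{\F}$ in $\widetilde{D}$ precisely when $x+y$ is a fixed point of $\F$ in $D_{K_y}$. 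By the invariance of the index under this homeomorphism we have $i_{K_y}(\F,D_{K_y})=i_K(\widetilde{\F},\widetilde{D})$, which reduces each claim to its classical counterpart for $\widetilde{\F}$ on $K$.

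With this reduction in place, I would verify each hypothesis translates correctly. For property (1), the condition $x\neq \F x+\sigma e$ on $\partial D_{K_y}$ becomes $(x-y)\neq \widetilde{\F}(x-y)+\sigma e$ for all points of $\partial\widetilde{D}$ and all $\sigma>0$, where $e\in K\setminus\{0\}$ is unchanged since the direction vector lives in $K$ itself; the classical result then gives $i_K(\widetilde{\F},\widetilde{D})=0$. For property (2), the hypothesis $\mu(x-y)\neq \F x-y$ for $\mu\geq 1$ is, after substituting $w=x-y\in\partial\widetilde{D}$, exactly the statement $\mu w\neq \widetilde{\F}(w)$ for all $\mu\geq 1$, which is precisely the condition guaranteeing $i_K(\widetilde{\F},\widetilde{D})=1$ in the classical theory. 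Property (3) is the additivity/excision consequence: the corresponding classical statement yields a fixed point of $\widetilde{\F}$ in $\widetilde{D}\setminus\overline{\widetilde{D'}}$, and applying $T^{-1}$ returns a fixed point of $\F$ in $D_{K_y}\setminus\overline{D'}_{K_y}$, with the roles of the indices $0$ and $1$ interchangeable as stated.

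The step I expect to require the most care is the verification that $K_y$ is an ANR and that the fixed point index on $K_y$ is genuinely well-defined and invariant under the affine homeomorphism $T$, so that the transport argument is legitimate rather than merely formal. Since translates of cones are retracts of the ambient Banach space and $T$ is a homeomorphism onto a cone, the index theory for compact maps on ANRs (see \cite{Br, DuGr, Nu93}) supplies both the existence of the index and its homeomorphism invariance; once this is granted, the three properties follow by direct translation of the classical normalization, existence, and additivity properties. The remaining bookkeeping—checking that boundaries, closures, and compactness are preserved under $T$—is routine and I would state it briefly rather than expand it in full.
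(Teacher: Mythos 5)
Your proposal is correct, but it takes a genuinely different route from the paper's. You conjugate with the translation $T(z)=z-y$, observe that $\widetilde{\F}=T\circ \F\circ T^{-1}$ is a compact self-map of the cone $K$ on $\widetilde{D}=(D-y)\cap K$, and then import each property from the classical cone theory (e.g.\ Lemma 12.1 of \cite{amann}) via the homeomorphism invariance (commutativity) of the fixed point index on ANRs; your translations of the hypotheses are accurate, and in particular the assumption $y\in D$ becomes $0\in\widetilde{D}$, which is exactly what the classical index-$1$ criterion $\mu w\neq \widetilde{\F}(w)$ for $\mu\geq 1$, $w\in\partial\widetilde{D}$, needs. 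The paper instead works directly on $K_y$, redoing Amann's homotopy arguments there: for (1) it uses $H(\lambda,x)=\F(x)+\lambda\gamma e$ with $\gamma$ chosen larger than $(\alpha+\beta)/\|e\|$, where $\alpha,\beta$ bound $\|x\|$ and $\|\F(x)\|$ on $D_{K_y}$ (this homotopy stays in $K_y$ because $K_y+K\subseteq K_y$), deducing index $0$ since a fixed point of $\F+\gamma e$ would violate the norm bound; for (2) it uses $H(\lambda,x)=(1-\lambda)y+\lambda\F(x)$, admissible by convexity of $K_y$ and the boundary condition $\mu(x-y)\neq\F x-y$, and concludes $i_{K_y}(\F,D_{K_y})=i_{K_y}(y,D_{K_y})=1$ by normalization; (3) is additivity plus the solution property in both treatments. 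Your reduction buys economy and generality --- every classical cone result transports at once to translates --- at the price of invoking the commutativity property of the ANR index as a black box, a point you rightly flag as the delicate step; the paper's direct proof uses only homotopy invariance, normalization and additivity on $K_y$ itself, and makes explicit where the affine structure enters (namely $K_y+K\subseteq K_y$ and $y\in K_y$). If you keep your version, do state explicitly that $\widetilde{D}$ is open and bounded in $K$ with $0\in\widetilde{D}$, and record the index identity $i_{K_y}(\F,D_{K_y})=i_K(\widetilde{\F},\widetilde{D})$ with a precise reference, since that identity carries the whole argument.
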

\begin{proof}
(1) Let $\alpha=\sup \{  \|x\| : x \in D_{K_y} \}$, $\beta=\sup \{  \|\F(x)\| : x \in D_{K_y} \}$ and let
$\gamma> \dfrac{\alpha+\beta}{\|e\|}$.
Define $H:[0,1] \times \overline{D}_{K_y} \to E$ by
$H(\lambda,x)= \F(x) + \lambda \gamma e$.
Note that $H$ is a compact map with values in $K_y$.
By the Homotopy invariance property, we get
$i_{K_y}(\F, D_{K_y})=i_{K_y}(\F+\gamma e , D_{K_y})$.

Assume now that $i_{K_y}(\F, D_{K_y}) \ne 0$.
Then, there exists $\bar x \in  D_{K_y}$ such that
$\bar x = \F(\bar x) + \gamma e$.
Consequently,
$\| \bar x \| \ge \gamma \|e\| - \| \F(\bar x)\| \ge \gamma \|e\| - \beta >\alpha$,
which is a contradiction. Hence, $i_{K_y}(\F, D_{K_y}) = 0$.

(2) Define $H:[0,1] \times \overline{D}_{K_y} \to E$ by
$H(\lambda,x)=(1-\lambda)y+\lambda \F(x)$.
Observe that $H$ is a compact map with values in $K_y$.
Thus, by the Homotopy invariance and Normalization properties, we have
$i_{K_y}(\F, D_{K_y})=i_{K_y}(y, D_{K_y})=1$

(3) This is a consequence of the Additivity and Solution properties.
\end{proof}

\section{Nontrivial solutions for a class of perturbed integral equations} \label{sec-changesign}

Given a compact interval $I\subset \R$, by $C(I, \R)$ we mean the Banach space of the
continuous functions defined on $I$ with the usual supremum norm.
Since we work with functions defined on different intervals (usually $I=[-r,0]$ or $I=[-r,1]$ with $r>0$), for sake of clarity
the norm of $u \in C(I, \R)$ will be denoted by $\|u\|_{I}$.

Given $r>0$ and a continuous function $u: J \to \R$, defined on a real interval
$J$, and given $t \in \R$ such that $[t-r, t] \subseteq J$, we adopt the
standard notation $u_t : [-r, 0] \to \R$ for the function defined by
$u_t (\theta) = u(t + \theta)$.

Let us consider the following integral equation in the space $C([-r, 1], \R)$:
\begin{equation}\label{eqhamm}
u(t)=\psi(t) + \int_{0}^{1} k(t,s)g(s)F(s,u_s)\,ds + \gamma(t) \alpha[u] =:\F u(t),
\end{equation}
where 
$$\alpha[u]=\int_{0}^{1} u(s)\,dA(s).$$

We require the following assumptions on the maps $F$, $k$, $\psi$, $\gamma$, $\alpha$ and $g$ that occur in \eqref{eqhamm} and on the delay $r$:
\begin{enumerate}
\item [$(C_{1})$] The function $\psi: [-r,1] \to \R$ is continuous and
such that $\psi(t)=0$ for all $t\in[0,1]$.
\item [$(C_{2})$] The kernel $k:[-r,1] \times [0,1] \to \R$ is measurable,
verifies $k(t,s)=0$ for all $t\in[-r,0]$ and almost every (a.\,e.) $s \in[0,1]$, and
for every $\bar t \in
[0,1]$ we have
\begin{equation*}
\lim_{t \to \bar t} |k(t,s)-k(\bar t,s)|=0 \;\text{ for a.\,e. } s \in
[0,1].
\end{equation*}{}
\item [$(C_{3})$]
 There exist a subinterval $[a,b] \subseteq (0,1]$, a measurable function
$\Phi$ with $\Phi \geq 0$ a.\,e., and a constant $c_1=c_1(a,b) \in (0,1]$ such that
\begin{align*}
|k(t,s)|\leq \Phi(s) \text{ for  all }  &t \in [0,1] \text{ and a.\,e. } \, s\in [0,1],\\
k(t,s) \geq c_1\,\Phi(s) \text{ for  all } &t\in [a,b] \text{ and a.\,e. } \, s \in [0,1].
\end{align*}{}
\item [ $(C_{4})$] The function $g:[0,1] \to \R$ is measurable, $g(t) \geq 0$ a.\,e. $t \in [0,1]$,
and satisfies that $g\,\Phi \in L^1[0,1]$
and $\int_a^b \Phi(s)g(s)\,ds >0$.{}
\item [$(C_{5})$] $F: [0,1] \times C([-r, 0], \R) \to [0,\infty)$ is an operator that satisfies some 
Carath\'eodory-type conditions (see also \cite{halelunel}); namely,
for each $\phi$, $t \mapsto F(t,\phi)$ is measurable and for a.\,e. $t$, $\phi \mapsto F(t,\phi)$ is continuous. Furthermore,
for each $R>0$, there exists $\varphi_{R} \in
L^{\infty}[0,1]$ such that{}
\begin{equation*}
F(t,\phi) \le \varphi_{R}(t) \ \text{for all} \ \phi \in C([-r, 0], \R)
\ \text{with} \ \|\phi\|_{[-r,0]} \le R,\ \text{and a.\,e.}\ t\in [0,1].
\end{equation*}{}
\item[$(C_{6})$]
$A$ is of bounded variation  ($\operatorname{Var}(A)<+\infty$), and
$\mathcal{K}_A(s):=\int_{{0}}^{{1}} k(t,s)dA(t) \geq 0$ for a.e. 
$s \in [0,1]$.
\item [ $(C_{7})$]
The function $\gamma: [-r,1] \to \R$ is continuous, $\gamma\not\equiv 0$ and
such that $\gamma(t)=0$ for all $t\in[-r,0]$; moreover,
$0 \leq \alpha[\gamma] <1\;
\text{and there exists}\; c_{2} \in(0,1] \;\text{such that}\;
\gamma(t) \geq c_{2}\|\gamma\|_{[0,1]} \;\text{for all }\; t \in [a,b]$.
\item [ $(C_{8})$] The inequality $r<b-a$ holds.
\end{enumerate}

We stress that, in particular, the assumption $(C_{5})$ is crucial to prove the compactness of the operator $\F$
(see Theorem \ref{thmk} below).

In the Banach space $C([-r, 1], \R)$ we define the cone
$$
K_0=\{u\in C([-r, 1], \R): u(t)=0\ \text{for all}\ t\in[-r,0],  \min_{t \in [a,b]}u(t)\geq c \|u\|_{[-r,1]}, \alpha[u] \ge 0\},
$$
where $c=\min\{c_1,c_2\}$. Note that $K_0 \neq \{0\}$ since $\gamma \in K_0$ and, furthermore, that the functions in $K_0$ are non-negative in the subset $[a,b]$ and are allowed to \emph{change sign} in $[0,1]$. The cone $K_0$ is a modification of the cone of functions introduced by Infante and Webb in~\cite{gijwjiea}. The idea of 
incorporating the functional $\alpha$ within the definition of the cone (this allows the use of signed measures) can be found in~\cite{jwgi-nodea-08} for the case of positive functions and in~\cite{Cab1} for the case of functions that are allowed to change sign.

We consider the following translate of the cone $K_0$,
$$K_\psi=\psi + K_0 = \{\psi +u : u \in K_0\}.$$
\begin{defn} \label{translcone}
We define the following subsets of $C([-r, 1], \R)$: $$K_{0,\rho}:=\{u\in K_0: \|u\|_{[0,1]} <\rho\},\ 
V_{0,\rho}:=\{u \in K_0: \displaystyle{\min_{t\in [a,b]}}u(t)<\rho \}$$
and the corresponding translates
$$K_{\psi,\rho}:= \psi + K_{0,\rho}, \  V_{\psi,\rho}:= \psi + V_{0,\rho}.$$
\end{defn}
Observe that
$\partial K_{\psi,\rho} = \psi + \partial K_{0,\rho}$ and $\partial V_{\psi,\rho} = \psi + \partial V_{0,\rho}$.
Let us stress that a key feature 
of these sets is that they can be
nested
$$
K_{\psi,\rho}\subset V_{\psi,\rho}\subset K_{\psi,\rho/c}.
$$
Furthermore, note that
$u\in  K_{\psi}$ means that $u=\psi+v$ with $v \in  K_{0}$ and, therefore, we have
\begin{equation}\label{norm-tra}
 \|u\|_{[-r,1]} =\max\{ \|\psi\|_{[-r,0]}, \|v\|_{[0,1]}\}.
\end{equation}
\begin{thm}\label{thmk}
Assume that the hypotheses $(C_{1})$-$(C_{8})$ hold for some $R>0$.
Then $\F$ maps
$\overline{K}_{\psi,R}$ into $K_\psi$ and is compact. When these hypotheses
hold for every $R>0$, $\F$ is compact and maps $K_\psi$ into $K_\psi$.
\end{thm}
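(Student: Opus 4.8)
The plan is to prove the two assertions separately, first fixing the value of $R$ for which the hypotheses are assumed and then passing to the union over all $R$; within the fixed-$R$ case I would treat cone invariance and compactness in turn.

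First I would establish that $\F$ carries $\overline{K}_{\psi,R}$ into $K_\psi$. Fix $u\in\overline{K}_{\psi,R}$ and write $u=\psi+v$ with $v\in K_0$ and $\|v\|_{[0,1]}\le R$. Since $\psi\equiv 0$ on $[0,1]$ by $(C_{1})$, we get $\alpha[u]=\alpha[v]\ge 0$, and by \eqref{norm-tra} the quantity $\|u_s\|_{[-r,0]}\le\|u\|_{[-r,1]}$ is bounded by a constant depending only on $R$ and $\|\psi\|_{[-r,0]}$; hence $(C_{5})$ furnishes an $L^\infty$ dominating function $\varphi$, independent of $u$, with $0\le F(s,u_s)\le\varphi(s)$ a.e. Setting $w:=\F u-\psi$, for $t\in[-r,0]$ the kernel condition of $(C_{2})$ and $\gamma\equiv 0$ on $[-r,0]$ from $(C_{7})$ give $w(t)=0$. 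For $t\in[0,1]$, the bounds $|k(t,s)|\le\Phi(s)$ from $(C_{3})$ and $|\gamma(t)|\le\|\gamma\|_{[0,1]}$ together with the nonnegativity of $F$, $g$ and $\alpha[u]$ yield $\|w\|_{[-r,1]}\le\int_0^1\Phi(s)g(s)F(s,u_s)\,ds+\|\gamma\|_{[0,1]}\alpha[u]$, whereas for $t\in[a,b]$ the lower estimates $k(t,s)\ge c_1\Phi(s)$ and $\gamma(t)\ge c_2\|\gamma\|_{[0,1]}$ give $w(t)\ge c\bigl(\int_0^1\Phi(s)g(s)F(s,u_s)\,ds+\|\gamma\|_{[0,1]}\alpha[u]\bigr)\ge c\|w\|_{[-r,1]}$ with $c=\min\{c_1,c_2\}$, which is exactly the Harnack-type inequality in the definition of $K_0$.

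Next I would check the remaining defining condition $\alpha[w]\ge 0$. Applying $\alpha$ to $w$ and interchanging the order of integration — legitimate since $A$ has bounded variation and $\Phi g\varphi\in L^1$ — produces $\alpha[w]=\int_0^1\mathcal{K}_A(s)g(s)F(s,u_s)\,ds+\alpha[\gamma]\,\alpha[u]$, where $\mathcal{K}_A(s)=\int_0^1 k(t,s)\,dA(t)$. By $(C_{6})$ the first integrand is nonnegative, and by $(C_{7})$ we have $0\le\alpha[\gamma]$, so $\alpha[w]\ge 0$. Combined with the previous paragraph this shows $w\in K_0$, i.e.\ $\F u\in K_\psi$.

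For compactness on $\overline{K}_{\psi,R}$ I would verify continuity, uniform boundedness, and equicontinuity, then invoke Arzel\`a--Ascoli. Continuity of $\F$ follows from the continuity of $\phi\mapsto F(t,\phi)$ in $(C_{5})$ and dominated convergence with dominating function $\Phi g\varphi$; uniform boundedness is immediate from $|\F u(t)-\psi(t)|\le\int_0^1\Phi g\varphi\,ds+\|\gamma\|_{[0,1]}\operatorname{Var}(A)R$. The decisive step is equicontinuity: from $|\F u(t)-\F u(\bar t)|\le\int_0^1|k(t,s)-k(\bar t,s)|g(s)\varphi(s)\,ds+|\psi(t)-\psi(\bar t)|+|\gamma(t)-\gamma(\bar t)|\operatorname{Var}(A)R$ the right-hand side is independent of $u$, so the problem reduces to showing the first term is small whenever $t$ is near $\bar t$, uniformly in $\bar t$. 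I expect this uniformity to be the main obstacle, and I would dispatch it by observing that $(C_{2})$ and dominated convergence (dominating function $2\Phi g\varphi\in L^1$) make $t\mapsto k(t,\cdot)g\varphi$ a continuous map from $[0,1]$ into $L^1[0,1]$; since $[0,1]$ is compact this map is uniformly continuous, which is precisely the required uniform smallness. The $\psi$- and $\gamma$-terms are equicontinuous because those functions are uniformly continuous on $[-r,1]$. Arzel\`a--Ascoli then gives that $\F(\overline{K}_{\psi,R})$ is relatively compact, and together with continuity this makes $\F$ compact.

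Finally, if the hypotheses hold for every $R>0$, then each $u\in K_\psi$ lies in some $\overline{K}_{\psi,R}$, so $\F u\in K_\psi$ and hence $\F(K_\psi)\subseteq K_\psi$; moreover every bounded subset of $K_\psi$ is contained in some $\overline{K}_{\psi,R}$, on which $\F$ has relatively compact image, so $\F$ is compact on $K_\psi$. I would note that $(C_{8})$ is not needed for this theorem and is reserved for the existence and multiplicity arguments of Section~\ref{sec-changesign}.
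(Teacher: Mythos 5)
Your proposal is correct and follows essentially the same route as the paper's proof: direct verification of the defining conditions of $K_0$ for $\F u-\psi$ (vanishing on $[-r,0]$, the Harnack-type inequality via the bounds in $(C_3)$ and $(C_7)$, and the identity $\alpha[\F u]=\alpha[\gamma]\,\alpha[u]+\int_0^1\mathcal{K}_A(s)g(s)F(s,u_s)\,ds\ge 0$), followed by uniform boundedness, equicontinuity and Arzel\`a--Ascoli. Your equicontinuity step, phrased as uniform continuity of the $L^1[0,1]$-valued map $t\mapsto k(t,\cdot)g\varphi$ on a compact interval, is a slightly more careful rendering of the paper's appeal to $(C_2)$ and dominated convergence, and your closing remarks (explicit operator continuity, the passage from fixed $R$ to all $R>0$, and the observation that $(C_8)$ is not used here) merely fill in details the paper leaves implicit.
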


\begin{proof}
Let $R>0$ be given and let $u \in \overline{K}_{\psi,R}$.
Let us show that $\F u -\psi \in K_0$.
First of all observe that our assumptions imply that $\F u$ is continuous on $[-r,1]$ and that
$\F u(t)-\psi(t)=0$ for $t \in [-r,0]$.
Now, for every $t \in [0,1]$ we have
\begin{align*}
|\F u(t) -\psi(t)| & \leq \int_{0}^{1}
|k(t,s)| g(s) F(s,u_s)\,ds + |\gamma(t)| \alpha[u] \\
&\leq    \int_{0}^{1} \Phi(s)g(s) F(s,u_s)\,ds +\|\gamma\|_{[0,1]} \alpha[u],
\end{align*}
moreover, since $[a,b]\subseteq (0,1]$,
$$
\min_{t\in [a,b]} \big(\F u(t) -\psi(t) \big) \geq c_1
\int_{0}^{1} \Phi(s)g(s) F(s,u_s)\,ds
+ c_2 \|\gamma\|_{[0,1]} \alpha[u]
\geq c\|\F u -\psi\|_{[-r,1]}.
$$
Furthermore, by $(C6)$, $\alpha$ is a bounded linear
operator. Using $(C6)$ and $(C7)$ we have
$$
\alpha[\F u]=\alpha[\gamma]\alpha[u]+
\int_{0}^{1} \mathcal{K}_{A}(s)g(s)F(s,u_s)\,ds\geq 0.
$$
Therefore we have that $\F u\in K_\psi$ for every $u\in
\overline{K}_{\psi,R}$.

To prove the compactness of $\F$,
let $\{u^n\}$ be a sequence in $C([-r, 1], \R)$ with $\|u^n\|_{[-r,1]}<R$.
Observe that $\|u_t^n\|_{[-r,0]}<\hat R$ for all $t \in [0,1]$, where $\hat R := R+\|\psi\|_{[-r,0]}$.
Consequently, for $t \in [-r,1]$ we have
\begin{align*}
|\F u^n(t)|& \leq |\psi(t)| + \int_{0}^{1}
|k(t,s)| g(s) F(s,u_s^n)\,ds
+ |\gamma(t)| \alpha[u^n]\\
&\leq \|\psi\|_{[-r,0]} + \int_{0}^{1} \Phi(s)g(s) \varphi_{\hat R}(s)\,ds
+\|\gamma\|_{[0,1]} \, R \operatorname{Var} (A).
\end{align*}
Hence, the sequence $\{\F u^n\}$ is bounded.

Now, by assumption $(C_{2})$ and Lebesgue's dominated convergence theorem, the function
$t\mapsto\int_{0}^{1}k(t,s) g(s) \varphi_{\hat R}(s)\,ds$
is continuous.
Let $\e >0$ be given; 
by the continuity of $\psi$ and $\gamma$ and of
$\displaystyle\int_{0}^{1}k(\cdot,s) g(s) \varphi_{\hat R}(s)\,ds$, 
there exists $\d>0$ such that:
\begin{itemize}
\item[] $|\psi(t_1)-\psi(t_2)|<\e$, provided that $t_1,t_2 \in [-r,0]$ with $|t_1-t_2|<\d$;
\item[] $|\gamma(t_1)-\gamma(t_2)|<\e$, provided that $t_1,t_2 \in [0,1]$ with $|t_1-t_2|<\d$;
\item[] $\displaystyle\int_{0}^{1}|k(t_1,s)-k(t_2,s)| g(s) \varphi_{\hat R}(s)\,ds <\e$, provided that $t_1,t_2 \in [0,1]$ with $|t_1-t_2|<\d$.
\end{itemize}
Therefore we have
$$
|\F u^n(t_1)-\F u^n(t_2)| \leq
|\psi(t_1)-\psi(t_2)| < \e,$$
if $t_1,t_2 \in [-r,0]$ with $|t_1-t_2|<\d$;
\begin{align*}
|\F u^n(t_1)-\F u^n(t_2)| \leq &
\int_{0}^{1}
|k(t_1,s)-k(t_2,s)| g(s) F(s,u_s^n)\,ds+|\gamma(t_1)-\gamma(t_2)| \alpha[u^n]\\
< &  \e\bigl(1+R \operatorname{Var}(A)\bigr),
\end{align*}
if $t_1,t_2 \in [0,1]$ with $|t_1-t_2|<\d$;
\begin{align*}
|\F u^n(t_1)-\F u^n(t_2)| \leq &
|\F u^n(t_1)-\F u^n(0)| +
|\F u^n(0)-\F u^n(t_2)| \\
< &\e\bigl(2+R \operatorname{Var}(A)\bigr),
\end{align*}
whenever $-r\leq t_1<0<t_2\leq 1$ with $|t_1-t_2|<\d$. 

Therefore the sequence $\{\F u^n\}$ is equicontinuous. The compactness of $\F$ now follows from the Ascoli-Arzel\`a Theorem.
\end{proof}

In the sequel, we give a condition that ensures that the index is 1 on $K_{\psi,\rho}$
for a suitable $\rho$ larger than the norm of $\psi$. 
\begin{lem}
\label{ind1b} Assume that 
\begin{enumerate}
\item[$(\mathrm{I}_{\protect\rho }^{1})$] \label{EqB} there exists $\rho> \|\psi\|_{[-r,0]}$ such that
$$
\frac{F^{(-\rho,\rho)}}{m}
 <1,
$$
where
$$
\frac{1}{m}:=\sup_{t\in [0,1]} \left\{ \int_{0}^{1}|k(t,s)|g(s)\,ds +
\frac{|\gamma(t)|}{1-\alpha[\gamma]}\int_{0}^{1}
\mathcal{K}_{A}(s)g(s)\,ds \right\}
$$
and 
$$
  F^{(-\rho,\rho)}:=\sup \left\{\frac{F(t,\phi)}{\rho }:\;
t\in [0,1], \;  \phi \in C([-r, 0], \R)
\;\text{ with } \; \|\phi\|_{[-r,0]} \le \rho \right\}.$$ 
\end{enumerate}{}
Then $i_{K_\psi}(\F,K_{\psi,\rho})=1$.
\end{lem}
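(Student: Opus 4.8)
The plan is to invoke part (2) of Lemma \ref{lemind} with the translate $K_\psi$ (so $y=\psi$) and the open set $D_{K_y}=K_{\psi,\rho}$. Thus it suffices to verify that $\mu(x-\psi)\neq \F x-\psi$ for every $x\in\partial K_{\psi,\rho}$ and every $\mu\geq 1$. I would argue by contradiction: suppose such an $x$ and $\mu$ exist. Writing $x=\psi+v$ with $v\in K_0$, membership in $\partial K_{\psi,\rho}$ forces $\|v\|_{[0,1]}=\rho$, and the equation $\mu(x-\psi)=\F x-\psi$ reads, for $t\in[0,1]$,
$$
\mu\, v(t)=\int_0^1 k(t,s)g(s)F(s,x_s)\,ds+\gamma(t)\alpha[x].
$$
On $[-r,0]$ both sides vanish by $(C_1)$ and $(C_7)$, so no information is lost there, and $\|v\|_{[-r,1]}=\|v\|_{[0,1]}=\rho$.

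First I would record a pointwise bound on the argument of $F$. Since $\psi$ is supported on $[-r,0]$ and $v$ on $[0,1]$, we have $\|x_s\|_{[-r,0]}\le \|x\|_{[-r,1]}$, which by \eqref{norm-tra} equals $\max\{\|\psi\|_{[-r,0]},\rho\}=\rho$, using $\rho>\|\psi\|_{[-r,0]}$. Hence $F(s,x_s)\le \rho\,F^{(-\rho,\rho)}$ for a.e.\ $s$ by the definition of $F^{(-\rho,\rho)}$.

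The crucial step---and the one I expect to require the most care---is to control the nonlocal term $\alpha[x]$. Because $\psi\equiv 0$ on $[0,1]$ we have $\alpha[x]=\alpha[v]\geq 0$. Applying the functional $\alpha$ to both sides of the displayed equation and using the identity for $\alpha[\F x]$ established in the proof of Theorem \ref{thmk}, namely $\alpha[\F x]=\alpha[\gamma]\alpha[x]+\int_0^1\mathcal{K}_A(s)g(s)F(s,x_s)\,ds$, I obtain
$$
(\mu-\alpha[\gamma])\,\alpha[x]=\int_0^1\mathcal{K}_A(s)g(s)F(s,x_s)\,ds.
$$
Since $\mu\geq 1$ and $0\le\alpha[\gamma]<1$ by $(C_7)$, the coefficient satisfies $\mu-\alpha[\gamma]\geq 1-\alpha[\gamma]>0$, so the nonnegativity of $\mathcal{K}_A$, $g$ and $F$ yields
$$
\alpha[x]\le \frac{1}{1-\alpha[\gamma]}\int_0^1\mathcal{K}_A(s)g(s)F(s,x_s)\,ds.
$$

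Finally I would combine these estimates. Taking absolute values in the first displayed equation, inserting the bound on $\alpha[x]$ and then $F(s,x_s)\le \rho\,F^{(-\rho,\rho)}$, and taking the supremum over $t\in[0,1]$, the resulting bracketed expression is exactly $1/m$, so
$$
\mu\,\|v\|_{[0,1]}\le \rho\,F^{(-\rho,\rho)}\cdot\frac{1}{m}.
$$
With $\mu\geq 1$ and $\|v\|_{[0,1]}=\rho$, dividing by $\rho>0$ gives $1\le F^{(-\rho,\rho)}/m$, contradicting $(\mathrm{I}_\rho^1)$. Hence the hypothesis of Lemma \ref{lemind}(2) holds and $i_{K_\psi}(\F,K_{\psi,\rho})=1$.
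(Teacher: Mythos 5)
Your proof is correct and follows essentially the same route as the paper's: an application of Lemma \ref{lemind}(2) with $y=\psi$, applying $\alpha$ to the contradiction equation to solve for $\alpha[x]$ via $\mu-\alpha[\gamma]\ge 1-\alpha[\gamma]>0$, bounding $F(s,x_s)\le\rho\,F^{(-\rho,\rho)}$ through \eqref{norm-tra}, and taking the supremum to reach $\mu\rho<\rho$. The only (immaterial) difference is that you bound $\alpha[x]$ by the $\frac{1}{1-\alpha[\gamma]}$ expression before substituting, whereas the paper substitutes the exact $\frac{1}{\mu-\alpha[\gamma]}$ formula first and performs that estimate in the final chain of inequalities.
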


\begin{proof}
We show that $\mu (u-\psi) \neq \F u-\psi$ for every $u \in \partial K_{\psi,\rho}$
and for every $\mu \geq 1$.

In fact, if this does not happen, there exist $\mu \geq 1$ and $u\in
\partial K_{\psi,\rho}$ such that $\mu (u-\psi)=\F u-\psi$,
that is
$$
\mu \big(u(t)-\psi(t)\big)=  \int_{0}^{1}
k(t,s)g(s)F(s,u_s)\,ds + \gamma(t) \alpha[u],
\quad \text{for every}\ t \in [-r,1].
$$
Applying $\alpha$ to both sides of the equation and noting that $\alpha[\psi]=0$, we get
$$\mu \alpha[ u] = \int_{0}^{1}
\mathcal{K}_{A}(s)g(s)F(s,u_s)\,ds + \alpha[\gamma]\alpha[u]$$
thus,  from $(C_7)$, $\mu-\a[\c]\geq 1-\a[\c]>0$, and we deduce that
$$\alpha[u]=\frac{1}{\mu-\alpha[\gamma]}\int_{0}^{1}
\mathcal{K}_{A}(s)g(s)F(s,u_s)\,ds$$
and we get, by substitution,
$$\mu \big(u(t)-\psi(t)\big)=  \int_{0}^{1}
k(t,s)g(s)F(s,u_s)\,ds + \frac{\gamma(t)}{\mu-\alpha[\gamma]}\int_{0}^{1}
\mathcal{K}_{A}(s)g(s)F(s,u_s)\,ds.$$
Recall that $u\in \partial K_{\psi,\rho}$ means that $u=\psi+v$ with $v \in \partial K_{0,\rho}$ and, in particular we have that $\|v\|_{[0,1]} =\rho$.
Now observe that $F(s,u_s) \leq \rho F^{(-\rho,\rho)}$ for all $s\in [0,1]$. This estimate follows
from the definition of $F^{(-\rho,\rho)}$ and the fact that, since $\|\psi\|_{[-r,0]}<\rho$, we have that $\|u_s\|_{[-r,0]} \leq \rho$ for all $s$.
Therefore, taking the absolute value and then the supremum for $t\in [-r,1]$ in the above equality, we get
\begin{align*}
\mu \rho \leq & 
\sup_{t\in [0,1]} \left\{ \int_{0}^{1}|k(t,s)|g(s)F(s,u_s)\,ds +
\frac{|\gamma(t)|}{\mu-\alpha[\gamma]}\int_{0}^{1}
\mathcal{K}_{A}(s)g(s)F(s,u_s)\,ds \right\}\\
\leq &\rho F^{(-\rho,\rho)}\cdot\sup_{t\in [0,1]}
\left\{ \int_{0}^{1}|k(t,s)|g(s)\,ds +
\frac{|\gamma(t)|}{1-\alpha[\gamma]}\int_{0}^{1}
\mathcal{K}_{A}(s)g(s)\,ds \right\}
 <\rho.
\end{align*}

This contradicts the fact that $\mu \geq 1$ and proves the result.
\end{proof}
\begin{rem}
If the condition $(\mathrm{I}_{\protect\rho }^{1})$ holds for a suitable $\rho> \|\psi\|_{[-r,0]}$, then the operator  $\F$ has a fixed point in $K_{\psi,\rho}$. Note that this fixed point could be a `trivial' solution (but with a nonzero norm) of the equation~\eqref{eqhamm}, namely
$$
u(t)=\begin{cases}
\psi(t),\ & t \in [-r,0], \\
0,\ & t \in [0,1].
\end{cases}
$$
\end{rem}
We now make use of the assumption $(C_{8})$ and we provide a condition that guarantees that the index is equal to zero on $V_{\psi,\rho}$, for some appropriate $\rho>0$.
\begin{lem}
\label{idx0b1} Assume that
\begin{enumerate}
\item[$(\mathrm{I}_{\protect\rho }^{0})$] there exist $\rho >0$ such that
such that
$$
\dfrac{F_{(\rho ,{\rho /c})}}{M(a,b)}>1,
$$
where
$$ 
\frac{1}{M(a,b)} :=\inf_{t\in [a,b]}\left\{\int_{a+r}^{b}k(t,s)g(s)\,ds +
\frac{\gamma(t)}{1-\alpha[\gamma]}\int_{a+r}^{b}
\mathcal{K}_{A}(s)g(s)\,ds \right\}$$
and
$$
F_{(\rho ,{\rho /c})} :=\inf \left\{\frac{F(t,\phi)}{\rho }%
:\; t\in [a,b], \;  \phi \in C([-r, 0], \R)
\;\text{ with } \; \phi(\theta) \in [\rho ,\rho /c]
\;\text{ for all } \; \theta \in [-r,0] \right\}.$$
\end{enumerate}

Then $i_{K_\psi}(\F,V_{\psi,\rho})=0$.
\end{lem}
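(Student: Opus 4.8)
The plan is to apply part~(1) of Lemma~\ref{lemind} to the translate $K_\psi$ with the open set $V_{\psi,\rho}$, choosing as distinguished direction $e=\gamma$, which lies in $K_0\setminus\{0\}$ by $(C_7)$ (and for which the subsequent computation is cleanest, since $\gamma$ already appears in $\F$). It then suffices to show that $u\neq \F u+\sigma\gamma$ for every $u\in\partial V_{\psi,\rho}$ and every $\sigma>0$. Arguing by contradiction, suppose such $u$ and $\sigma$ exist. Writing $u=\psi+v$ with $v\in\partial V_{0,\rho}$, we have $\min_{t\in[a,b]}v(t)=\rho$, and since $v\in K_0$ the cone inequality $\min_{[a,b]}v\geq c\|v\|_{[-r,1]}$ forces $\|v\|_{[-r,1]}\leq\rho/c$.

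First I would extract a pointwise lower bound on $F(s,u_s)$, and this is precisely where $(C_8)$ is used: because $r<b-a$ the interval $[a+r,b]$ is nonempty, and for $s\in[a+r,b]$ and $\theta\in[-r,0]$ the shifted argument $s+\theta$ lies in $[a,b]\subseteq(0,1]$. There $\psi$ vanishes, so $u(s+\theta)=v(s+\theta)$, and the bounds above give $u_s(\theta)=v(s+\theta)\in[\rho,\rho/c]$. By the definition of $F_{(\rho,\rho/c)}$ this yields $F(s,u_s)\geq\rho\,F_{(\rho,\rho/c)}$ for every $s\in[a+r,b]$.

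Next I would process the functional $\alpha$. Applying $\alpha$ to the identity $u=\F u+\sigma\gamma$, using $\alpha[\psi]=0$ together with $(C_6)$--$(C_7)$, one solves the resulting scalar equation for $\alpha[u]$ and obtains
$$
\alpha[u]+\sigma=\frac{1}{1-\alpha[\gamma]}\int_{0}^{1}\mathcal{K}_{A}(s)g(s)F(s,u_s)\,ds+\frac{\sigma}{1-\alpha[\gamma]}\geq\frac{1}{1-\alpha[\gamma]}\int_{0}^{1}\mathcal{K}_{A}(s)g(s)F(s,u_s)\,ds,
$$
the last step discarding the positive $\sigma$-term. Substituting back into the identity and evaluating at $t\in[a,b]$, where $k(t,\cdot)\geq c_1\Phi\geq0$ by $(C_3)$, $\gamma(t)\geq0$ by $(C_7)$ and $\mathcal{K}_A\geq0$ by $(C_6)$, I may shrink both integrals from $[0,1]$ to $[a+r,b]$ without increasing their value. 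Inserting $F(s,u_s)\geq\rho\,F_{(\rho,\rho/c)}$ and taking the infimum of the resulting bracket over $t\in[a,b]$ gives $v(t)\geq\rho\,F_{(\rho,\rho/c)}/M(a,b)$ for all $t\in[a,b]$; minimizing the left side then produces $\rho=\min_{[a,b]}v\geq\rho\,F_{(\rho,\rho/c)}/M(a,b)>\rho$ by hypothesis $(\mathrm{I}_\rho^0)$, the desired contradiction.

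The main obstacle is the delay bookkeeping in the second paragraph: one must verify that restricting to $s\in[a+r,b]$ is exactly what keeps the argument $s+\theta$ of $u_s$ inside $[a,b]$ for all $\theta\in[-r,0]$, so that the two-sided estimate $u_s(\theta)\in[\rho,\rho/c]$ holds and matches the definition of $F_{(\rho,\rho/c)}$. Everything else is a careful but routine tracking of signs, using the nonnegativity built into $(C_3)$, $(C_6)$ and $(C_7)$ to justify shrinking the domains of integration and discarding the $\sigma$-contribution.
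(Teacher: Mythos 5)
Your proposal is correct and follows essentially the same route as the paper's own proof: the same choice $e=\gamma$ in part~(1) of Lemma~\ref{lemind}, the same application of $\alpha$ to solve for $\alpha[u]$, the same use of $(C_8)$ to get $u_s(\theta)=v(s+\theta)\in[\rho,\rho/c]$ for $s\in[a+r,b]$, and the same shrinking of the integrals before taking the minimum over $[a,b]$. The only (harmless) difference is that the paper verifies $u\neq\F u+\sigma\gamma$ for all $\sigma\geq 0$, which also confirms that $\F$ is fixed-point free on $\partial V_{\psi,\rho}$ so the index is defined, whereas you state it for $\sigma>0$; your computation applies verbatim at $\sigma=0$, so nothing is lost.
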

\begin{proof}
Since $0\not\equiv\c\in K_0$ we can choose $e=\c$ in Lemma \ref{lemind}. We now prove that
\begin{equation*}
u\ne \F u+\sigma \gamma \quad\text{for  all } u\in \partial
V_{\psi,\rho}\text{ and } \sigma \geq 0.
\end{equation*}

In fact, if not, there exist $u\in \partial V_{\psi,\rho}$ and $\sigma\geq 0$ such that $u=\F u+\sigma \gamma$.
Then, in particular, we have
$$u(t)=\int_{0}^{1}
k(t,s)g(s)F(s,u_s)\,ds+\sigma \gamma(t) + \gamma(t) \alpha[u],\ \text{for every}\ t \in [0,1]$$
and
$$\alpha[ u] = \int_{0}^{1}
\mathcal{K}_{A}(s)g(s)F(s,u_s)\,ds + \sigma \alpha[\gamma] +\alpha[\gamma] \alpha[u].$$
Therefore we have,
$$\alpha[u]=\frac{1}{1-\alpha[\gamma]}\int_{0}^{1}
\mathcal{K}_{A}(s)g(s)F(s,u_s)\,ds+\frac{\sigma \alpha[\gamma]}{1-\alpha[\gamma]}$$
and, by substitution, we obtain
\begin{align*}
u(t)=&\int_{0}^{1}
k(t,s)g(s)F(s,u_s)\,ds+ \sigma \gamma(t)\\
&+\frac{\gamma(t)}{1-\alpha[\gamma]}\left(\int_{0}^{1}
\mathcal{K}_{A}(s)g(s)F(s,u_s)\,ds+\sigma\a[\gamma] \right).
\end{align*}

We claim that $F(s,u_s) \ge \rho F_{(\rho ,{\rho /c})}$ for all $s\in[a+r,b]$; observe that such an interval is nontrivial by $(C_{8})$.
In fact, since $u\in \partial V_{\psi,\rho}$, we have $u=\psi+v$ with $v \in \partial V_{0,\rho}$.
Consequently, given $s\in[a+r,b]$, we have that
$u_s(\theta) =  u(s+\theta)=v(s+\theta)$ for $\theta \in [-r,0]$ due to the facts that
$s+\theta\in[a,b] \subseteq (0,1]$ for all $\theta$ and that $\psi$ vanishes on $[0,1]$.
Furthermore, the function $v \in \partial V_{0,\rho}$ is such that 
$v(\tau) \in  [\rho ,\rho /c]$ for $\tau \in [a,b]$.
This follows from the definition of $V_{0,\rho}$ and from the inclusion
$V_{0,\rho}\subset K_{0,\rho/c}$.
Summing up we get that, if $s\in[a+r,b]$, then $u_s(\theta)=u(s+\theta) \in  [\rho ,\rho /c]$
for all $\theta \in [-r,0]$.
Therefore, by definition of the number $F_{(\rho ,{\rho /c})}$ we have $F(s,u_s) \ge \rho F_{(\rho ,{\rho /c})}$ for all $s\in[a+r,b]$, as claimed.

Hence we get, for $t\in[a,b]$,
\begin{align*}
u(t)&\ge
\int_{a+r}^{b} k(t,s)g(s)F(s,u_s)\,ds+\frac{\gamma(t)}{1-\alpha[\gamma]}
\int_{a+r}^{b} \mathcal{K}_{A}(s)g(s)F(s,u_s)\,ds
\\ &\ge
\rho F_{(\rho ,{\rho /c})}
\;  \left(
\int_{a+r}^{b} k(t,s)g(s)\,ds
+\frac{\gamma(t)}{1-\alpha[\gamma]}
\int_{a+r}^{b} \mathcal{K}_{A}(s)g(s)\,ds\right).
\end{align*}
Taking the minimum over $[a,b]$ gives
$\rho>\rho$, a contradiction.
\end{proof}

The above Lemmas can be combined in order to prove the following Theorem. Here we
deal with the existence of at least one, two or three nontrivial solutions.
We stress
that, by expanding the lists in conditions $(S_{5}),(S_{6})$ below, it is
possible to state results for four or more positive solutions, see for
example the paper by Lan~\cite{kljdeds} for the type of results that might be stated. We omit
the proof which follows directly from the properties of the fixed point index  stated in Lemma \ref{lemind}.
\begin{thm}
\label{thmmsol1} The integral equation \eqref{eqhamm} has at least one nontrivial solution
in $K_\psi$ if one of
the following conditions hold.
\begin{enumerate}
\item[$(S_{1})$] There exist $\rho _{1},\rho _{2}\in (0,\infty )$ with $\|\psi\|_{[-r,0]}<\rho _{2}$ and $\rho
_{1}/c<\rho _{2}$ such that $(\mathrm{I}_{\rho _{1}}^{0})$ and $(\mathrm{I}_{\rho _{2}}^{1})$ hold.
\item[$(S_{2})$] There exist $\rho _{1},\rho _{2}\in (0,\infty )$ with $\|\psi\|_{[-r,0]}<\rho
_{1}<\rho _{2}$ such that $(\mathrm{I}_{\rho _{1}}^{1})$ and $(\mathrm{I}%
_{\rho _{2}}^{0})$ hold.
\end{enumerate}
The integral equation \eqref{eqhamm} has at least two nontrivial solutions in $K_\psi$ if one of
the following conditions hold.
\begin{enumerate}
\item[$(S_{3})$] There exist $\rho _{1},\rho _{2},\rho _{3}\in (0,\infty )$
with $\|\psi\|_{[-r,0]}<\rho _{2}$ and $\rho _{1}/c<\rho _{2}<\rho _{3}$ such that $(\mathrm{I}_{\rho
_{1}}^{0}),$ $(
\mathrm{I}_{\rho _{2}}^{1})$ $\text{and}\;\;(\mathrm{I}_{\rho _{3}}^{0})$
hold.
\item[$(S_{4})$] There exist $\rho _{1},\rho _{2},\rho _{3}\in (0,\infty )$
with $\|\psi\|_{[-r,0]}<\rho _{1}<\rho _{2}$ and $\rho _{2}/c<\rho _{3}$ such that $(\mathrm{I}%
_{\rho _{1}}^{1}),\;\;(\mathrm{I}_{\rho _{2}}^{0})$ $\text{and}\;\;(\mathrm{I%
}_{\rho _{3}}^{1})$ hold.
\end{enumerate}
The integral equation \eqref{eqhamm} has at least three nontrivial solutions in $K_\psi$ if one
of the following conditions hold.
\begin{enumerate}
\item[$(S_{5})$] There exist $\rho _{1},\rho _{2},\rho _{3},\rho _{4}\in
(0,\infty )$ with $\|\psi\|_{[-r,0]}<\rho _{2}$ and $\rho _{1}/c<\rho _{2}<\rho _{3}$ and $\rho _{3}/c<\rho
_{4}$ such that $(\mathrm{I}_{\rho _{1}}^{0}),$ $(\mathrm{I}_{\rho _{2}}^{1}),\;\;(\mathrm{I}%
_{\rho _{3}}^{0})\;\;\text{and}\;\;(\mathrm{I}_{\rho _{4}}^{1})$ hold.
\item[$(S_{6})$] There exist $\rho _{1},\rho _{2},\rho _{3},\rho _{4}\in
(0,\infty )$ with $\|\psi\|_{[-r,0]}<\rho _{1}<\rho _{2}$ and $\rho _{2}/c<\rho _{3}<\rho _{4}$
such that $(\mathrm{I}_{\rho _{1}}^{1}),\;\;(\mathrm{I}_{\rho
_{2}}^{0}),\;\;(\mathrm{I}_{\rho _{3}}^{1})$ $\text{and}\;\;(\mathrm{I}%
_{\rho _{4}}^{0})$ hold.
\end{enumerate}
\end{thm}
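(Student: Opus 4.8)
The plan is to assemble the three index computations already in place --- Lemma \ref{ind1b}, Lemma \ref{idx0b1}, and the existence/additivity property of Lemma \ref{lemind}(3) --- into a sequence of annular fixed point arguments. By Theorem \ref{thmk}, under $(C_{1})$--$(C_{8})$ the operator $\F$ is a compact self-map of $\overline{K}_{\psi,R}$ for $R$ larger than every radius appearing in the chosen condition, so all the indices below are well defined. The driving mechanism is the nesting relation $K_{\psi,\rho}\subset V_{\psi,\rho}\subset K_{\psi,\rho/c}$ recorded before Theorem \ref{thmk}, which lets me place a set on which the index equals $1$ strictly inside (or outside) a set on which it equals $0$ and then apply Lemma \ref{lemind}(3).

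For the one-solution case $(S_{1})$, condition $(\mathrm{I}_{\rho_1}^{0})$ gives $i_{K_\psi}(\F,V_{\psi,\rho_1})=0$ by Lemma \ref{idx0b1}, while $(\mathrm{I}_{\rho_2}^{1})$ gives $i_{K_\psi}(\F,K_{\psi,\rho_2})=1$ by Lemma \ref{ind1b}. Since $\rho_1/c<\rho_2$, the nesting yields $\overline{V}_{\psi,\rho_1}\subset\overline{K}_{\psi,\rho_1/c}\subset K_{\psi,\rho_2}$, so Lemma \ref{lemind}(3) produces a fixed point of $\F$ in $K_{\psi,\rho_2}\setminus\overline{V}_{\psi,\rho_1}$. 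Case $(S_{2})$ is symmetric: $(\mathrm{I}_{\rho_1}^{1})$ and $(\mathrm{I}_{\rho_2}^{0})$ give index $1$ on $K_{\psi,\rho_1}$ and index $0$ on $V_{\psi,\rho_2}$, and $\rho_1<\rho_2$ forces $\overline{K}_{\psi,\rho_1}\subset V_{\psi,\rho_2}$ (because $\min_{[a,b]}v\le\|v\|_{[0,1]}\le\rho_1<\rho_2$ for $v\in\overline{K}_{0,\rho_1}$), so the second half of Lemma \ref{lemind}(3) gives a fixed point in $V_{\psi,\rho_2}\setminus\overline{K}_{\psi,\rho_1}$.

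The one point I must verify is that these fixed points are genuinely \emph{nontrivial}, i.e. distinct from $\psi$. Writing a fixed point as $u=\psi+v$ with $v\in K_0$, the trivial solution $\psi$ corresponds to $v=0$; since $\|0\|_{[0,1]}=0$ and $\min_{[a,b]}0=0$, the function $\psi$ lies in the innermost region ($K_{\psi,\rho_1}$ or $V_{\psi,\rho_1}$) and is therefore excluded from every annular region produced above. Concretely, a fixed point lying outside $\overline{V}_{\psi,\rho_1}$ has $\min_{[a,b]}v\ge\rho_1>0$, and one lying outside $\overline{K}_{\psi,\rho_1}$ has $\|v\|_{[0,1]}\ge\rho_1>0$; in either case $v\neq0$, so $u\neq\psi$.

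The multiplicity statements follow by iterating this annular argument. For $(S_{3})$ the inclusions $\overline{V}_{\psi,\rho_1}\subset K_{\psi,\rho_2}$ and $\overline{K}_{\psi,\rho_2}\subset V_{\psi,\rho_3}$ (coming from $\rho_1/c<\rho_2<\rho_3$ together with the nesting) yield one fixed point in $K_{\psi,\rho_2}\setminus\overline{V}_{\psi,\rho_1}$ and a second in $V_{\psi,\rho_3}\setminus\overline{K}_{\psi,\rho_2}$; these two annular regions are separated by $\overline{K}_{\psi,\rho_2}$, hence disjoint, so the solutions are distinct, and both are nontrivial by the paragraph above. Case $(S_{4})$ is the mirror image, and $(S_{5})$, $(S_{6})$ simply add one more alternating link to the chain to produce a third solution. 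I expect no deep obstacle here: the entire difficulty is bookkeeping --- checking that the successive ordering hypotheses on the $\rho_j$ make the alternating $K$- and $V$-sets strictly nested (so that consecutive annular regions are pairwise disjoint) and confirming at each stage that $\psi$ is excluded, which is precisely what guarantees that the fixed points we extract are distinct nontrivial solutions.
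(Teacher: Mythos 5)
Your proposal is correct and is precisely the argument the paper has in mind: the authors omit the proof, stating that it ``follows directly from the properties of the fixed point index stated in Lemma \ref{lemind}'', i.e.\ combining the index computations of Lemmas \ref{ind1b} and \ref{idx0b1} with the nesting $K_{\psi,\rho}\subset V_{\psi,\rho}\subset K_{\psi,\rho/c}$ and the additivity/solution property of Lemma \ref{lemind}(3). Your additional verifications --- that the annular regions are pairwise disjoint under the stated orderings of the $\rho_j$, and that any fixed point $u=\psi+v$ outside the inner set has $v\neq 0$, hence $u\neq\psi$ --- correctly fill in exactly the bookkeeping the authors left implicit.
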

\begin{rem}
Note that the solutions given by Theorem \ref{thmmsol1} are nontrivial in the sense that do not coincide with $\psi$; nevertheless, in view of
\eqref{norm-tra}, in the case of conditions $(S_{1})$, $(S_{3})$, $(S_{5})$ one of the solutions could have the same norm as $\psi$.
\end{rem}
\section{Non-negative solutions under stronger hypotheses} \label{sec-pos}
By means of  an  approach similar to that of the previous Section, we can prove the existence of solutions that are non-negative on $[0,1]$,
in the spirit of Remark 3.4 of \cite{gijwems} and Sections 2 and 3 of \cite{ac-gi-at-bvp}. To be more precise, we require that the maps $F$, $k$, $\psi$, $\gamma$, $\alpha$ and $g$ that occur in \eqref{eqhamm} and the delay $r$ satisfy the assumptions 
$(C_{1})-(C_{8})$ with $(C_{1})$, $(C_{3})$, $(C_{5})$ and $(C_{7})$ replaced with the following `positivity conditions'. 
\begin{enumerate}
\item [$(C'_{1})$] The function $\psi: [-r,1] \to [0,+\infty)$ is continuous and
such that $\psi(t)=0$ for all $t\in[0,1]$.
\item [$(C'_{3})$]
The kernel $k$ is non-negative in $[-r,1] \times [0,1]$ and there exist a subinterval $[a,b] \subseteq (0,1]$, a measurable function
$\Phi$ with $\Phi \geq 0$ a.\,e., and a constant $c_1=c_1(a,b) \in (0,1]$ such that
\begin{align*}
k(t,s)\leq \Phi(s) \text{ for  all }  &t \in [0,1] \text{ and a.\,e. } \, s\in [0,1],\\
k(t,s) \geq c_1\,\Phi(s) \text{ for  all } &t\in [a,b] \text{ and a.\,e. } \, s \in [0,1].
\end{align*}{}
\item [$(C'_{5})$] $F: [0,1] \times C([-r, 0], [0,\infty) )\to [0,\infty)$ is an operator that satisfies  
Carath\'eodory-type conditions as in $(C_{5})$. Furthermore,
for each $R>0$, there exists $\varphi_{R} \in
L^{\infty}[0,1]$ such that{}
\begin{equation*}
F(t,\phi) \le \varphi_{R}(t) \ \text{for all} \ \phi \in C([-r, 0], [0,\infty) )
\ \text{with} \ \|\phi\|_{[-r,0]} \le R,\ \text{and a.\,e.}\ t\in [0,1].
\end{equation*}{}
\item [$(C'_{7})$]
The function $\gamma: [-r,1] \to [0,\infty)$ is continuous, $\gamma\not\equiv 0$ and
such that $\gamma(t)=0$ for all $t\in[-r,0]$; moreover,
$0 \leq \alpha[\gamma] <1\;
\text{and there exists}\; c_{2} \in(0,1] \;\text{such that}\;
\gamma(t) \geq c_{2}\|\gamma\|_{[0,1]} \;\text{for all }\; t \in [a,b]$.
\end{enumerate}
Then, using the notation 
$$
P=\{u\in C([-r, 1], \R): u(t)\geq 0\ \text{for all}\ t\in[-r,1]\},
$$
it can be shown, by arguments similar to the previous Section, that $\F$ is compact and leaves the affine cone
\begin{equation}\label{poscone}
\mathcal K_{\psi}=\psi +( K_0 \cap P) 
\end{equation}
invariant. 
We now state two results analogous to Lemmas \ref{ind1b} and \ref{idx0b1}. The proofs, similar to the ones before, are omitted. Here, the sets $\mathcal K_{0,\rho}$ and $\mathcal V_{0,\rho}$ and the corresponding translates $\mathcal K_{\psi,\rho}$ and $\mathcal V_{\psi,\rho}$ are as in Definition \ref{translcone} with $K_0 \cap P$  in place of $K_0$.

\begin{lem}\label{idx1bpos}
Assume that 
\begin{enumerate}
\item[$(\overline{\mathrm{I}_{\protect\rho }^{1}})$] \label{EqBpos} there exists $\rho> \|\psi\|_{[-r,0]}$ such that
$$
\frac{F^{(0,\rho)}}{m}
 <1,
$$
where
$$
  F^{(0,\rho)}:=\sup \left\{\frac{F(t,\phi)}{\rho }:\;
t\in [0,1], \;  \phi \in C([-r, 0], [0,\infty))
\;\text{ with } \; \|\phi\|_{[-r,0]} \le \rho \right\}.$$ 
\end{enumerate}{}
Then $i_{\mathcal K_\psi}(\F, \mathcal K_{\psi,\rho})=1$.
\end{lem}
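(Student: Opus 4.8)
The plan is to follow the template of the proof of Lemma \ref{ind1b} almost verbatim, the single substantive change being that the non-negativity of the functions in the affine cone lets us work with the one-sided supremum $F^{(0,\rho)}$ in place of the two-sided $F^{(-\rho,\rho)}$. Since the operator $\F$ is compact and leaves $\mathcal K_\psi$ invariant (as noted in this section, by arguments parallel to Theorem \ref{thmk}), the index $i_{\mathcal K_\psi}(\F, \mathcal K_{\psi,\rho})$ is well defined, and by property (2) of Lemma \ref{lemind} it suffices to establish that
$$
\mu(u-\psi)\neq \F u-\psi \quad\text{for all } u\in\partial\mathcal K_{\psi,\rho}\text{ and all }\mu\geq 1.
$$
First I would argue by contradiction, assuming there exist $\mu\geq 1$ and $u\in\partial\mathcal K_{\psi,\rho}$ with $\mu(u-\psi)=\F u-\psi$, and write this identity out pointwise on $[-r,1]$.

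Next I would apply the functional $\alpha$ to both sides. Using $\alpha[\psi]=0$ together with $(C_6)$ and $(C'_7)$, exactly as in Lemma \ref{ind1b}, the inequality $\mu-\alpha[\gamma]\geq 1-\alpha[\gamma]>0$ lets me solve for $\alpha[u]$ and substitute it back, eliminating the functional and leaving an identity for $\mu\bigl(u(t)-\psi(t)\bigr)$ involving only the kernel term and the $\gamma$-term weighted by $1/(\mu-\alpha[\gamma])$.

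The one point that genuinely uses the positivity hypotheses is the pointwise bound on $F$. Writing $u=\psi+v$ with $v\in K_0\cap P$, both summands are non-negative -- $\psi$ by $(C'_1)$ and $v$ because $v\in P$ -- so $u\geq 0$ on $[-r,1]$; hence for each $s$ the history $u_s$ is a \emph{non-negative} element of $C([-r,0],\R)$ with $\|u_s\|_{[-r,0]}\leq\rho$, the norm bound following as in Lemma \ref{ind1b} from $\|v\|_{[0,1]}=\rho$ and $\|\psi\|_{[-r,0]}<\rho$. This is precisely the admissibility condition in the definition of $F^{(0,\rho)}$, so $F(s,u_s)\leq\rho\,F^{(0,\rho)}$ for all $s\in[0,1]$; had $u$ been allowed to change sign, this estimate would instead require the larger constant $F^{(-\rho,\rho)}$, which is exactly why the weaker hypothesis $(\overline{\mathrm{I}_{\rho}^{1}})$ suffices in the present setting.

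Finally I would insert this bound, take absolute values and the supremum over $t\in[0,1]$, and recognise the resulting factor (after replacing $1/(\mu-\alpha[\gamma])$ by the larger $1/(1-\alpha[\gamma])$, legitimate since $\gamma,\mathcal K_A,g,F\geq 0$) as $1/m$. The hypothesis $(\overline{\mathrm{I}_{\rho}^{1}})$ then gives $\mu\rho\leq \rho\,F^{(0,\rho)}/m<\rho$, contradicting $\mu\geq 1$. I do not expect any real obstacle: the argument is a routine transcription of Lemma \ref{ind1b}, and the only thing requiring care is checking that the restriction to non-negative histories is legitimate, which is guaranteed by the invariance of $\mathcal K_\psi$ and the positivity assumptions $(C'_1)$, $(C'_5)$ and $(C'_7)$.
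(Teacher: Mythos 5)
Your proposal is correct and follows exactly the route the paper intends: the authors omit this proof precisely because it is the argument of Lemma \ref{ind1b} transcribed to the affine cone $\mathcal K_\psi=\psi+(K_0\cap P)$, and you have carried out that transcription faithfully. You also correctly isolate the one genuinely new point, namely that $u=\psi+v\geq 0$ (by $(C'_1)$ and $v\in P$) makes each history $u_s$ an admissible non-negative function in the definition of $F^{(0,\rho)}$, which is exactly what permits the one-sided supremum.
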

\begin{lem}
\label{idx0b1pos} Assume that
\begin{enumerate}
\item[$(\overline{\mathrm{I}_{\protect\rho }^{0}})$] there exist $\rho >0$ such that
such that
$$
\dfrac{F_{(\rho ,{\rho /c})'}}{M(a,b)}>1,
$$
where
$$
F_{(\rho ,{\rho /c})'} :=\inf \left\{\frac{F(t,\phi)}{\rho }%
:\; t\in [a,b], \;  \phi \in C([-r, 0], [0,+\infty))
\;\text{with} \; \phi(\theta) \in [\rho ,\rho /c]
\;\text{for all} \; \theta \in [-r,0] \right\}.$$
\end{enumerate}
Then $i_{\mathcal K_\psi}(\F,\mathcal V_{\psi,\rho})=0$.
\end{lem}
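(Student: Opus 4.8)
The plan is to mirror the proof of Lemma~\ref{idx0b1}, now working with the fixed point index on the affine cone $\mathcal K_\psi=\psi+(K_0\cap P)$ and invoking property~(1) of Lemma~\ref{lemind} with the retract $K_0\cap P$ in place of $K_0$. Since $(C'_7)$ guarantees $\gamma\geq 0$, $\gamma\not\equiv 0$ and $\gamma\in K_0$, we have $\gamma\in(K_0\cap P)\setminus\{0\}$, so $e=\gamma$ is an admissible pushing direction. It therefore suffices to show that
$$
u\neq \F u+\sigma\gamma\quad\text{for all }u\in\partial\mathcal V_{\psi,\rho}\text{ and all }\sigma\geq 0,
$$
from which Lemma~\ref{lemind}(1) yields $i_{\mathcal K_\psi}(\F,\mathcal V_{\psi,\rho})=0$.

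Arguing by contradiction, I would suppose $u=\F u+\sigma\gamma$ for some $u\in\partial\mathcal V_{\psi,\rho}$ and $\sigma\geq 0$. Applying the functional $\alpha$ to both sides and using $(C_6)$, $(C'_7)$ together with $\alpha[\psi]=0$, one solves the resulting scalar identity for $\alpha[u]$, exactly as in Lemma~\ref{idx0b1}, obtaining
$$
\alpha[u]=\frac{1}{1-\alpha[\gamma]}\int_0^1\mathcal K_A(s)g(s)F(s,u_s)\,ds+\frac{\sigma\,\alpha[\gamma]}{1-\alpha[\gamma]}.
$$
Substituting this back gives an expression for $u(t)$ in terms of the two integral kernels in which both $\sigma$-terms are non-negative (here $\gamma\geq 0$ on $[a,b]$ and $0\leq\alpha[\gamma]<1$) and may thus be discarded.

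The heart of the argument is the pointwise estimate $F(s,u_s)\geq\rho\,F_{(\rho,\rho/c)'}$ for $s\in[a+r,b]$, an interval that is nontrivial by $(C_8)$. Writing $u=\psi+v$ with $v\in\partial\mathcal V_{0,\rho}$, for $s\in[a+r,b]$ and $\theta\in[-r,0]$ one has $s+\theta\in[a,b]\subseteq(0,1]$, so $\psi(s+\theta)=0$ and $u_s(\theta)=v(s+\theta)$; by the nesting $\mathcal V_{0,\rho}\subset\mathcal K_{0,\rho/c}$ this forces $u_s(\theta)\in[\rho,\rho/c]$. The positivity conditions now enter: since $v\in K_0\cap P$ and $\psi\geq 0$ by $(C'_1)$, the translate $u$ is non-negative on all of $[-r,1]$, so $u_s\in C([-r,0],[0,+\infty))$ and the restricted infimum $F_{(\rho,\rho/c)'}$ of $(C'_5)$ is the correct quantity to apply. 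Feeding this estimate into the lower bound for $u(t)$, restricting the integrals to $[a+r,b]$ (legitimate since $k\geq 0$ by $(C'_3)$, $\mathcal K_A\geq 0$, $g\geq 0$, $F\geq 0$), and taking the minimum over $t\in[a,b]$ produces $\rho>\rho$ by the hypothesis $(\overline{\mathrm{I}_\rho^0})$, the desired contradiction.

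The only genuinely new point, and hence the step to check most carefully, is that every function encountered is non-negative so that the smaller domain of $F$ under $(C'_5)$ is respected: one must confirm $\gamma\in K_0\cap P$, that $u=\psi+v$ is non-negative, and that the nesting $\mathcal V_{0,\rho}\subset\mathcal K_{0,\rho/c}$ persists when $K_0$ is replaced by $K_0\cap P$. All of these follow directly from $(C'_1)$, $(C'_3)$, $(C'_7)$ and the definitions of $\mathcal K_{0,\rho}$ and $\mathcal V_{0,\rho}$; once they are in place, the remaining computations are identical to those of Lemma~\ref{idx0b1}.
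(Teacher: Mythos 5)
Your proof is correct and follows exactly the route the paper intends: the paper omits the proof of this lemma, noting only that it is ``similar to the ones before,'' and your argument---taking $e=\gamma$ in Lemma~\ref{lemind}(1), solving for $\alpha[u]$, discarding the non-negative $\sigma$-terms, and applying the estimate $F(s,u_s)\ge \rho\, F_{(\rho,\rho/c)'}$ on $[a+r,b]$ (nontrivial by $(C_8)$) to reach $\rho>\rho$---is precisely the proof of Lemma~\ref{idx0b1} transplanted to the affine cone $\mathcal K_\psi$. The only genuinely new points, which you correctly identify and verify, are that $\gamma\in (K_0\cap P)\setminus\{0\}$ under $(C'_7)$, that $u=\psi+v\ge 0$ on $[-r,1]$ so $u_s$ lies in the domain $C([-r,0],[0,\infty))$ of $F$ required by $(C'_5)$, and that the nesting $\mathcal V_{0,\rho}\subset \mathcal K_{0,\rho/c}$ persists after intersecting with $P$.
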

A result equivalent to Theorem~\ref{thmmsol1} is hold in this case, with nontrivial solutions belonging to the affine cone~\eqref{poscone}.
\section{Nontrivial solutions of some FBVP's}\label{ex-app}
In this Section we provide some applications of the results of Sections~\ref{sec-changesign} and \ref{sec-pos}. 
\subsection{Solutions that may change sign}
We illustrate the results of Section~\ref{sec-changesign}, by considering the FBVP
\begin{equation}\label{eq4.1}
-u''(t)=g(t) F(t,u_t),\ t \in [0,1],
\end{equation}
with initial conditions
\begin{equation}\label{eqic4.2}
u(t)=\psi(t),\ t \in [-r,0]
\end{equation}
and BCs
\begin{equation}\label{eqbc4.2}
u(0)=0,\; \beta u'(1) + u(\eta)=\a[u], \; {\beta}>0, \; {\eta}\in (0,1).
\end{equation}

The solution of the ODE $-u''=y$ under the BCs~\eqref{eqbc4.2} (a similar calculation, under a slightly different set of BCs, is done in~\cite{gijwems}) is given by
$$
u(t)= \frac{t}{\beta+\eta} \a [u] + \frac{\beta t}{\beta+\eta}\int_{0}^{1}y(s)ds
+\frac{t}{\beta+\eta}\int_{0}^{\eta}(\eta-s)y(s)ds
-\int_{0}^{t}(t-s)y(s)ds.
$$
By a solution of the FBVP \eqref{eq4.1}--\eqref{eqic4.2}--\eqref{eqbc4.2} we
mean a solution $u\in C[-r,1]$ of the corresponding integral
equation
$$
u(t)= \psi(t)
+\int_{0}^{1}k(t,s)g(s)F(s,u_s)ds
+\gamma(t)\a[u],\ t \in [-r,1],
$$
where $\gamma(t)= \dfrac{ t}{\beta+\eta}H(t)$ and
$$
k(t,s)=\left[\frac{\beta t}{\beta+\eta} +\dfrac{t}{\beta+\eta}(\eta-s)H(\eta-s)-
 (t-s) H(t-s)\right] H(t),
$$
with
$$
H(\tau)=
\begin{cases}
1,\ & \tau \geq 0, \\0,\ & \tau<0.
\end{cases}
$$
When $\beta\geq 0$, $k(t,s)$ changes sign when $0<\beta+\eta<1$, but is
non-negative on the strip $0\leq t \leq b$, $b<\beta+\eta$.
Assume $r<\beta+\eta$.
Then, we can apply the results of
Section~\ref{sec-changesign} to any interval $[a,b] \subset (0, \beta+\eta)$ of length $b-a>r$.
Observe that $(C_{7})$ holds with $c_2=a$.
We want to find $\Phi, c_1$ so that $(C_{3})$ holds.
For this purpose we follow the outline of \cite{gijwems} and take for simplicity
$$
\Phi (s)=\begin{cases}
s, & \text{ for }\beta+\eta\geq \frac{1}{2}, \\
\left[\dfrac{1 -(\beta + \eta )}{\beta+\eta}\right]s, & \text{ for
}\beta+\eta< \frac{1}{2}.
\end{cases}
$$
Then the upper bound $|k(t,s)|\leq \Phi(s)$ holds.
Concerning the lower bounds, we have that if
$\beta+\eta\geq \frac{1}{2}$, we may choose
$$
c_1=\min \Bigl\{ \frac{a \beta}{\beta+\eta},
\frac{\beta+\eta-b}{\beta+\eta} \Bigr\}.
$$
While if $\beta+\eta< \frac{1}{2}$, we may take
$$
c_1=\min \Bigl\{ \frac{a \beta}{1 -(\beta + \eta)},
\frac{\beta+\eta-b}{1 -(\beta + \eta)} \Bigr\}.
$$
Therefore we take
\begin{equation}\label{cbcA}
c=\begin{cases}
\min \Bigl\{ \dfrac{a \beta}{\beta+\eta},
\dfrac{\beta+\eta-b}{\beta+\eta} \Bigr\}, & \text{ for }\beta+\eta\geq \frac{1}{2}, \\
\min \Bigl\{ \dfrac{a \beta}{1 -(\beta + \eta)},
\dfrac{\beta+\eta-b}{1 -(\beta + \eta)} \Bigr\}
, & \text{ for
}\beta+\eta< \frac{1}{2}.
\end{cases}
\end{equation} 

Here we state, for brevity, a result regarding the existence of one nontrivial solution, that is a direct consequence of Theorem \ref{thmmsol1}. A similar result can be stated for the existence of multiple, nontrivial solutions.
\begin{thm} \label{example1}
Let $[a,b] \subset (0, \beta+\eta) \subset (0, 1)$ with $b-a>r$,
and let $c$ as in \eqref{cbcA} and $\int_a^b \Phi(s)g(s)\,ds >0$.
Then the FBVP \eqref{eq4.1}--\eqref{eqic4.2}--\eqref{eqbc4.2} has at least one nontrivial solution,
strictly positive on $[a,b]$, if either $(S_{1})$ or $(S_{2})$ of
Theorem~\ref{thmmsol1} holds.
\end{thm}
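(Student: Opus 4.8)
The plan is to identify the FBVP \eqref{eq4.1}--\eqref{eqic4.2}--\eqref{eqbc4.2} with a concrete instance of the perturbed Hammerstein equation \eqref{eqhamm} and then to read off the conclusion from Theorem~\ref{thmmsol1}. First I would record that, inverting $-u''=y$ under the BCs \eqref{eqbc4.2} as in the Green's function computation displayed above, a function $u\in C[-r,1]$ solves the FBVP if and only if it is a fixed point of the operator $\F$ of \eqref{eqhamm} associated with the explicit kernel $k(t,s)$ and with $\gamma(t)=\tfrac{t}{\beta+\eta}H(t)$ given before the statement, where $\psi$ is extended by $0$ on $[0,1]$.

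The substance of the argument is to verify that the standing hypotheses $(C_1)$--$(C_8)$ hold for this data, so that Theorem~\ref{thmk}, Lemma~\ref{ind1b} and Lemma~\ref{idx0b1} are applicable. Most of this is already contained in the discussion preceding the statement: $(C_3)$ holds with the stated $\Phi$ and $c_1$, $(C_7)$ holds with $c_2=a$, and the constant $c$ is then the one in \eqref{cbcA}; the hypothesis $\int_a^b\Phi(s)g(s)\,ds>0$ imposed in the theorem delivers $(C_4)$, while $(C_8)$ is exactly the assumption $b-a>r$. It remains to observe that $(C_1)$ and $(C_2)$ follow from the continuity of $\psi$ and $\gamma$ and from the Heaviside factors, which make $k(\cdot,s)$ continuous off the null set $\{s=t\}\cup\{s=\eta\}$ and force $k$ and $\gamma$ to vanish for $t\in[-r,0]$, and that $(C_5)$ and $(C_6)$ are the standing Carath\'eodory and sign conditions on $F$ and on the measure $A$.

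With the framework in force, applying Theorem~\ref{thmmsol1} under either $(S_1)$ or $(S_2)$ yields a fixed point $u=\psi+v$ of $\F$, with $v\in K_0$, that does not coincide with $\psi$; this is the asserted nontrivial solution of the FBVP. For the strict positivity on $[a,b]$ I would use the location of the fixed point: under $(S_1)$ it lies in $K_{\psi,\rho_2}\setminus\overline{V}_{\psi,\rho_1}$, so that $\min_{t\in[a,b]}v(t)\ge\rho_1>0$, while under $(S_2)$ it lies in $V_{\psi,\rho_2}\setminus\overline{K}_{\psi,\rho_1}$, so that $\|v\|_{[-r,1]}\ge\rho_1>0$ and hence $\min_{t\in[a,b]}v(t)\ge c\|v\|_{[-r,1]}>0$ by membership in $K_0$. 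In either case $v>0$ on $[a,b]$, and since $\psi\equiv 0$ on $[a,b]\subseteq(0,1]$ we conclude $u(t)=v(t)>0$ for every $t\in[a,b]$.

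The step I expect to be most delicate is not the index-theoretic part---this is furnished wholesale by Theorem~\ref{thmmsol1}---but rather the verification of the lower bound in $(C_3)$: one must confirm that $k(t,s)\ge c_1\Phi(s)$ holds on $[a,b]$ even though $k$ changes sign on $[0,1]$ when $0<\beta+\eta<1$, which is precisely why the constraint $[a,b]\subset(0,\beta+\eta)$ and the two-case definition of $\Phi$ are imposed. Checking that the stated $c_1$ genuinely works in each of the two regimes $\beta+\eta\ge\tfrac12$ and $\beta+\eta<\tfrac12$ is the only honestly computational obstacle.
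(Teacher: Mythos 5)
Your proposal is correct and follows exactly the route the paper intends: the paper states Theorem~\ref{example1} as a direct consequence of Theorem~\ref{thmmsol1}, with the verification of $(C_1)$--$(C_8)$ (the kernel bounds, $\Phi$, $c_1$, $c_2=a$, and $c$ as in \eqref{cbcA}) carried out in the text immediately preceding the statement, which is precisely what you reassemble. Your added localization argument for strict positivity on $[a,b]$ is a sound filling-in of a detail the paper leaves implicit (one can also note more simply that any solution $u=\psi+v$ with $v\in K_0\setminus\{0\}$ satisfies $\min_{t\in[a,b]}v(t)\geq c\|v\|_{[-r,1]}>0$), and the caveat that $k(\cdot,s)$ is in fact continuous in $t$ for each fixed $s>0$, since the Heaviside factors multiply terms vanishing at the jumps, is harmless.
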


Now let $f:[0,1]\times\R\times\R \to [0,\infty)$ be a given Carath\'eodory map, and consider the following
\emph{delay differential equation}
\begin{equation}\label{eqdelay}
-u''(t)=g(t) f(t,u(t),u(t-r)),\ t \in [0,1],
\end{equation}
with $g$ non-negative and measurable.
The techniques developed in this paper can be applied to
study the nontrivial solutions of \eqref{eqdelay}
with BCs \eqref{eqic4.2}--\eqref{eqbc4.2}.
In fact, observe that the equation \eqref{eqdelay} is a special case of the
functional differential equation \eqref{eq4.1}.
In order to do this, given $f:[0,1]\times\R\times\R \to [0,\infty)$ as above, we proceed as in~\cite{halelunel} and
define $F: [0,1] \times C([-r, 0], \R) \to [0,\infty)$ by
$$
F(t,\phi)=f(t,\phi(0),\phi(-r)).
$$
Note that the operator $F$, defined in this way, verifies condition $(C_{5})$
provided that the map $f$ satisfies the following Carath\'eodory-type assumption:

$(C''_{5})$ For each $R>0$, there exists $\varphi^*_{R} \in
L^{\infty}[0,1]$ such that{}
\begin{equation*}
f(t,u,v) \le \varphi^*_{R}(t) \;\text{ for all } \; u,v \in \R
\;\text{ with } \; |u| \le R,\;|v| \le R,\;\text{ and a.\,e. } \; t\in [0,1].
\end{equation*}
In addition, in order to obtain from Theorem~\ref{thmmsol1} existence and multiplicity results
for the FBVP~\eqref{eqdelay}--\eqref{eqic4.2}--\eqref{eqbc4.2}, it is sufficient to consider
the following numbers:
\begin{align*}
  f^{(-\rho,\rho)}=&\sup \Bigl\{\frac{f(t,u,v)}{\rho }:\;
t\in [0,1], \;  |u|, |v| \le \rho \Bigr\},\\
f_{(\rho ,{\rho /c})} =&\inf \Bigl\{\frac{f(t,u,v)}{\rho }%
:\; t\in [a,b], \;  \rho \le u \le \rho /c, \; \rho \le v \le \rho /c \Bigr\}.
\end{align*}
These numbers are easier to compute than the analogous ones in the general case of a functional differential equation.

Let us consider, for illustrative purposes, the following autonomous equation depending on a positive parameter $\lambda$.
\begin{equation}\label{eqdelay1}
-u''(t)= \lambda \, |u(t)|^{p-1} |u(t-r)|,\ t \in [0,1],
\end{equation}
where $p \geq 1$,
with the initial conditions \eqref{eqic4.2}
and the boundary conditions
\begin{equation}\label{eqbc4.2.0}
u(0)=0,\; \frac14 u'(1) + u\big(\frac14\big)=0.
\end{equation}

Here we have $f(u,v)=|u|^{p-1} |v|$ so that $f^{(-\rho,\rho)} = \rho^{p-1} = f_{(\rho ,{\rho /c})}$.
Moreover we have
$$k(t,s)=\left[\frac12 t + 2t\left(\frac14-s\right)H\left(\frac14-s\right)-
 (t-s) H(t-s)\right] H(t).$$
A direct calculation shows that
$$\sup_{t\in [0,1]}  \int_{0}^{1}|k(t,s)|\,ds =\frac{17}{16}$$
and therefore, in this case, $m=16/17$.

As a consequence of 
Theorem \ref{example1} (using $(S_{2})$ of Theorem \ref{thmmsol1})  
we get the following.
\begin{cor}
Let $[a,b]=[1/4,7/16]$, and let $c_2=1/4$ and $c_1=1/8$.
Assume that $r<3/16$ and let  $\psi$ with $\|\psi\|_{[-r,0]}<1$ be given.
Then, for every $0<\lambda < 16/17$
the FBVP~\eqref{eqdelay1}--\eqref{eqic4.2}--\eqref{eqbc4.2.0} has at least one nontrivial solution
$u_\lambda$, strictly positive on $[1/4,7/16]$, with $\|u_\lambda\|_{[-r,1]}>1$.
\end{cor}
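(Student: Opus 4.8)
The plan is to apply condition $(S_{2})$ of Theorem \ref{thmmsol1}, in the delay-equation form provided by Theorem \ref{example1}, with $\rho_1=1$ and a suitably large $\rho_2$, and then to read off the strict positivity and the norm bound from the localisation of the fixed point. First I would translate the data into the setting of Section \ref{sec-changesign}. Here $g\equiv 1$; the boundary conditions \eqref{eqbc4.2.0} correspond to $\beta=\eta=1/4$ and to the \emph{vanishing} functional $\alpha\equiv 0$, so that $\alpha[\gamma]=0$, $\mathcal K_A\equiv 0$, and the term $\gamma(t)\alpha[u]$ disappears. The nonlinearity is $F(t,\phi)=\lambda|\phi(0)|^{p-1}|\phi(-r)|$, which fulfils $(C_5)$ through $(C''_5)$, and the kernel $k$ is the one exhibited before the statement. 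Since $\beta+\eta=1/2$, the choice $[a,b]=[1/4,7/16]\subset(0,1/2)$ gives $b-a=3/16>r$, so $(C_8)$ holds; formula \eqref{cbcA} returns $c_1=1/8$ and $(C_7)$ gives $c_2=a=1/4$, whence $c=\min\{c_1,c_2\}=1/8$. Because the $\gamma$-term is absent, the constant $m$ collapses to $1/m=\sup_{t\in[0,1]}\int_0^1|k(t,s)|\,ds=17/16$, already evaluated in the text, so $m=16/17$.

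Next I would verify $(\mathrm{I}_{\rho_1}^{1})$ with $\rho_1=1$. As $\|\psi\|_{[-r,0]}<1=\rho_1$ this value is admissible, and with $f^{(-\rho,\rho)}=\lambda\rho^{p-1}$ one gets $F^{(-\rho_1,\rho_1)}/m=(17/16)\,\lambda\cdot 1^{p-1}=17\lambda/16$, which is $<1$ exactly when $\lambda<16/17$. Hence $(\mathrm{I}_{1}^{1})$ holds for every $\lambda\in(0,16/17)$, and Lemma \ref{ind1b} yields $i_{K_\psi}(\F,K_{\psi,1})=1$.

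Then I would produce $\rho_2>1$ satisfying $(\mathrm{I}_{\rho_2}^{0})$. With $\alpha\equiv 0$ the constant reduces to $1/M(a,b)=\inf_{t\in[a,b]}\int_{a+r}^{b}k(t,s)\,ds$, and this is strictly positive: the kernel is non-negative on the strip $0\le t\le b<\beta+\eta$, while $(C_8)$ makes $[a+r,b]$ non-degenerate, so $M(a,b)\in(0,\infty)$. Using $f_{(\rho,\rho/c)}=\lambda\rho^{p-1}$, condition $(\mathrm{I}_{\rho_2}^{0})$ becomes $\lambda\rho_2^{p-1}>M(a,b)$; for $p>1$ the left side tends to $+\infty$ as $\rho_2\to\infty$, so some $\rho_2>\rho_1$ works. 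Lemma \ref{idx0b1} then gives $i_{K_\psi}(\F,V_{\psi,\rho_2})=0$, and since $\overline{K}_{\psi,1}\subset V_{\psi,\rho_2}$, part (3) of Lemma \ref{lemind} (as in the proof of Theorem \ref{thmmsol1}) produces a fixed point $u_\lambda\in V_{\psi,\rho_2}\setminus\overline{K}_{\psi,1}$.

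Finally I would extract the two qualitative features. Writing $u_\lambda=\psi+v$ with $v\in K_0$, the exclusion $v\notin\overline{K}_{0,1}$ forces $\|v\|_{[0,1]}>1$, so by \eqref{norm-tra} $\|u_\lambda\|_{[-r,1]}=\max\{\|\psi\|_{[-r,0]},\|v\|_{[0,1]}\}>1$; in particular $u_\lambda\neq\psi$. Moreover $\psi$ vanishes on $[a,b]\subset(0,1]$, so there $u_\lambda=v$, and the cone inequality $\min_{t\in[a,b]}v(t)\ge c\,\|v\|_{[-r,1]}=c\,\|v\|_{[0,1]}>c>0$ shows that $u_\lambda$ is strictly positive on $[1/4,7/16]$. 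The main obstacle is the positive lower bound for $1/M(a,b)=\inf_{t\in[a,b]}\int_{a+r}^{b}k(t,s)\,ds$: the kernel carries Heaviside factors and the lower limit $a+r$ depends on $r$, so the integrand must be resolved case-by-case in $(t,s)$ over $[1/4,7/16]$. For the claimed range of $\lambda$ and $p>1$ only its positivity is needed, whereas the borderline exponent $p=1$ would additionally require $\lambda>M(a,b)$ and hence a sharp evaluation of $M(a,b)$.
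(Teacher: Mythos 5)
Your proposal is correct and takes essentially the same route as the paper's own proof: set $\rho_1=1$, verify $(\mathrm{I}_{\rho_1}^{1})$ from $\lambda<16/17=m$, pick $\rho_2$ large enough that $(\mathrm{I}_{\rho_2}^{0})$ holds, and conclude via $(S_2)$ of Theorem~\ref{thmmsol1}, with the norm bound and strict positivity read off from the localisation in the affine cone exactly as you do. Your closing caveat is well spotted and in fact sharper than the paper: the paper's prescription ``$\rho_2>M(a,b)/\lambda$'' is exact only for $p\ge 2$ (large $\rho_2$ still works for all $p>1$), while for $p=1$ one has $F_{(\rho,\rho/c)}=\lambda$ independent of $\rho$, so $(\mathrm{I}_{\rho_2}^{0})$ cannot be achieved by enlarging $\rho_2$ and the paper's argument tacitly requires $p>1$ just as yours does.
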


\begin{proof}
Take $\rho_1=1$ and observe that
$(\mathrm{I}_{\rho _{1}}^{1})$ holds since $\lambda < 16/17=m$.
Moreover, for $\rho_2$ large enough (precisely $\rho_2 > \dfrac{M(a,b)}\lambda$)
condition $(\mathrm{I}_{\rho _{2}}^{0})$ holds as well.
Thus, Theorem \ref{thmmsol1} $(S_{2})$ can be applied,
yielding at least one solution
$u_\lambda$, positive on $[1/4,7/16]$, with $1<\|u_\lambda\|_{[-r,1]}<\rho_2$.
\end{proof}

\subsection{Non-negative solutions}
We now show the applicability of the tools of Section \ref{sec-pos}.

Firstly we consider the FBVP \eqref{eq4.1}--\eqref{eqic4.2}--\eqref{eqbc4.2} 
with a non-negative initial datum $\psi$, and look for non-negative solutions assuming  
$\beta+\eta\geq 1$.
For a fixed value $r<1$ of the delay, we can apply the results of
Section~\ref{sec-pos} to an arbitrary interval $[a,b] \subset (0, 1)$ of length $b-a>r$.
Note that when $\beta+\eta\geq 1$ the kernel $k$ is non-negative and, reasoning as in \cite{jw-na05}, we take 
$$
\Phi(s)=\begin{cases}
\dfrac{\beta}{\beta+\eta}s,&\text{ if } s \geq \eta,\\
s\Bigl(1-\dfrac{s}{\beta+\eta}\Bigr), &\text{ if } s < \eta,
\end{cases}
$$
so that the upper bound $k(t,s)\leq \Phi(s)$ holds.
For the lower bounds, a
routine calculation shows that $k(t,s) \geq c_1 \Phi(s)$ for
$t \in [a,b], s\in [0,1]$ if
$$
c_1= \min\Bigl\{a,1-\dfrac{b}{\beta+\eta}\Bigr\}.
$$
Moreover, $(C'_{7})$ holds with $c_2=a$.
Thus we work in the affine cone~\eqref{poscone} with
$c=c_1$, obtaining
an existence result for non-negative solutions analogous to Theorem~\ref{example1}.
\medskip

Finally we turn our attention to the FBVP
\begin{equation}\label{kmt-eq-app}
u''(t)+F(t,u_t)=0,\ t \in [0,1],
\end{equation}
with initial conditions
\begin{equation}\label{kmt-i-app}
u(t)=\psi(t),\ t \in [-r,0],
\end{equation}
and boundary conditions (BCs)
\begin{equation}\label{kmt-bcs-app}
u(0)=0,\;  u(1)=\a[u],
\end{equation}
where $\psi$ is non-negative. This FBVP can be seen as a generalization of the FBVP~\eqref{kmt-eq}--\eqref{kmt-i}--\eqref{kmt-bcs}, since the BCs involve a more general functional given by a signed measure.
To the FBVP~\eqref{kmt-eq-app}--\eqref{kmt-i-app}--\eqref{kmt-bcs-app} we associate the perturbed integral equation 
$$
u(t)= \psi(t)
+\int_{0}^{1}k(t,s)g(s)F(s,u_s)ds
+\gamma(t)\a[u],\ t \in [-r,1],
$$
where 
$$\gamma(t)= t H(t)\quad \text{and}\quad
k(t,s)=\left[t(1-s) -
 (t-s) H(t-s)\right] H(t).
$$
Clearly $k$ and $\gamma$ are non-negative.
In a similar way as in \cite{jwgi-nodea-08}, we may choose
$$
\Phi (s)=s(1-s)
$$
Then we have $k(t,s)\leq \Phi(s)$. Furthermore, for a fixed $[a,b]\subset (0,1)$,  by direct calculation we obtain 
$$
c_1=\min \{ a, 1-b  \}\ \text{and}\ c_2=a.
$$
Thus we may take
\begin{equation}\label{lastc}
c=\min\{ a, 1-b  \},
\end{equation} 
and work in the affine cone~\eqref{poscone} with $c$ given by~\eqref{lastc}.
An analogue of Theorem~\ref{example1} holds in this case as well.

\section*{Acknowledgments}
The authors would like to thank the anonymous Referee for the careful reading of the manuscript and for the constructive comments. This paper was partially written during the visit of G. Infante to the Dipartimento di Ingegneria Industriale e Scienze Matematiche of the Universit\`{a} Politecnica delle Marche. G. Infante is grateful to the people of the aforementioned Dipartimento for their kind and warm hospitality. The authors were partially supported by G.N.A.M.P.A. - INdAM (Italy).


\begin{thebibliography}{99} %


\bibitem{abbd-book}
R. P. Agarwal, L. Berezansky, E. Braverman and A. Domoshnitsky, Nonoscillation theory of functional differential equations
with applications, \textit{Nonoscillation theory of functional differential equations with applications}, Springer, New York, 2012.

\bibitem{amannjfa} H. Amann,
On the number of solutions of nonlinear equations in ordered Banach spaces.
\textit{J. Functional Analysis}, \textbf{11} (1972), 346--384.

\bibitem{amann} H. Amann, Fixed point equations and nonlinear
eigenvalue problems in ordered Banach spaces, \textit{SIAM. Rev.},
\textbf{18} (1976), 620--709.

\bibitem{Br}
R. F. Brown, \textit{The Lefschetz fixed point theorem},
Scott, Foresman and Co., Glenview, Ill.-London, 1971.

\bibitem{ac-gi-at-bvp}
A. Cabada, G. Infante and F. A. F. Tojo, Nontrivial solutions of perturbed Hammerstein integral equations with reflections, \textit{Bound. Value Probl.}, \textbf{2013:86} (2013).

\bibitem{Cab1} A. Cabada, G. Infante and F. A. F. Tojo, Nonzero solutions of perturbed Hammerstein integral equations with deviated arguments and applications, 
\textit{Topol. Methods Nonlinear Anal.}, to appear.

\bibitem{Conti} R. Conti, 
Recent trends in the theory of boundary value problems for ordinary differential equations.
\textit{Boll. Un. Mat. Ital.}, \textbf{22} (1967), 135--178. 

\bibitem{djeb2014}
S. Djebali and K. Mebarki, Fixed point index on translates of cones and applications, \textit{Nonlinear Stud.}, \textbf{21} (2014), 579--589. 

\bibitem{ek94}
L. H. Erbe and Q. Kong, Boundary value problems for singular second-order functional-differential equations, \textit{J. Comput. Appl. Math.}, \textbf{53}  (1994),  377--388.

\bibitem{ekz-book}
L. H. Erbe, Q.  Kong and B. G. Zhang, 
\textit{Oscillation theory for functional-differential equations},
Monographs and Textbooks in Pure and Applied Mathematics, 190, Marcel Dekker, Inc., New York, 1995.

\bibitem{DuGr}
A. Granas and J. Dugundji, \textit{Fixed point theory}, Springer-Verlag, New York, 2003.


\bibitem{guimer} P. Guidotti and S. Merino, Gradual loss of positivity and hidden invariant cones in a scalar heat equation, \textit{Differential Integral Equations}, \textbf{13} (2000), 1551--1568.

\bibitem{guolak} D. Guo and V. Lakshmikantham,
\textit{Nonlinear Problems in Abstract Cones}, Academic Press, Boston,1988.

\bibitem{halelunel}
J. K. Hale and S. M. V. Lunel,
\textit{Introduction to Functional Differential Equations},
Springer Verlag, New York, 1993.

\bibitem{gijwjiea} G. Infante and J. R. L. Webb, Three point
boundary value problems with solutions that change sign,
\textit{J. Integral Equations Appl.}, \textbf{15} (2003), 37--57.

\bibitem{gijwnodea} G. Infante and J. R. L. Webb, Loss of positivity in
a nonlinear scalar heat equation, \textit{NoDEA
Nonlinear Differential Equations Appl.}, \textbf{13} (2006), 249--261.

\bibitem{gijwems} G. Infante and J. R. L. Webb, Nonlinear nonlocal boundary value problems and
perturbed Hammerstein integral equations, \textit{Proc. Edinb. Math.
Soc.}, \textbf{49} (2006), 637--656.

\bibitem{gi-pp-ft} 
 G. Infante, P. Pietramala and F. A. F. Tojo, 
 Nontrivial solutions of local and nonlocal Neumann boundary value problems, \textit{arXiv:1404.1390 [math.CA]}, submitted.

\bibitem{karaca13}
I. Y. Karaca, On positive solutions for second-order boundary value problems of functional differential equations,
\textit{Appl. Math. Comput.}, \textbf{219} (2013), 5433--5439.

\bibitem{kmt03}
G. L. Karakostas, K. G. Mavridis and P. Ch. Tsamatos, Multiple positive solutions for a functional second-order boundary value problem, \textit{J. Math. Anal. Appl.}, \textbf{282} (2003), 567--577. 

\bibitem{kmt04}
G. L. Karakostas, K. G. Mavridis and P. Ch. Tsamatos, Triple solutions for a nonlocal functional boundary value problem by Leggett-Williams theorem, \textit{Appl. Anal.}, \textbf{83} (2004), 957--970.

\bibitem{kttmna}
G. L. Karakostas and P. Ch. Tsamatos, Existence of multiple positive solutions for a nonlocal boundary value problem, 
\textit{Topol. Methods Nonlinear Anal.}, \textbf{19} (2002), 109--121.

\bibitem{ktejde} 
G. L. Karakostas and P. Ch. Tsamatos, Multiple positive solutions of some Fredholm integral equations arisen from nonlocal boundary-value problems, \textit{Electron. J. Differential Equations}, \textbf{2002}, No. 30, 17 pp.

\bibitem{kljdeds} K. Q. Lan,
Multiple positive solutions of Hammerstein integral equations with
singularities, \textit{Differential Equations and Dynamical
Systems}, \textbf{8} (2000), 175--195.

\bibitem{lw79} 
R. W. Leggett and L. R. Williams, Multiple positive fixed points of nonlinear operators on ordered Banach spaces, \textit{Indiana Univ. Math. J.},
\textbf{28} (1979), 673--688.


\bibitem {rma} 
R. Ma, A survey on nonlocal boundary value problems, \textit{Appl. Math. E-Notes}, \textbf{7} (2001), 257--279.

\bibitem{ma07}
R. Ma, Positive solutions for boundary value problems of functional differential equations,
\textit{Appl. Math. Comput.}, \textbf{193} (2007), 66--72.

\bibitem{sotiris-fde} 
S. K. Ntouyas,
Initial and boundary value problems for functional-differential equations via the topological transversality method: a survey, 
\textit{Bull. Greek Math. Soc.}, \textbf{40} (1998), 3--41.

\bibitem{sotiris} 
S. K. Ntouyas, Nonlocal initial and boundary value problems: a survey, \textit{Handbook of differential equations: ordinary differential
equations. Vol. II}, 461--557, Elsevier B. V., Amsterdam, 2005.

\bibitem{nst93} S. K. Ntouyas, Y. G. Sficas and P.Ch. Tsamatos,
An existence principle for boundary value problems for second order functional differential equations,
\textit{Nonlinear Anal.}, \textbf{20} (1993), 215--222.

\bibitem{Nu93}
R. D. Nussbaum, \textit{The fixed point index and fixed point theorems},
Topological methods for ordinary differential equations (Montecatini Terme,
1991), 143--205, Lecture Notes in Math., \textbf{1537}, Springer, Berlin, 1993.

\bibitem{Nuss} 
R. D. Nussbaum, Functional differential equations, \textit{Handbook of dynamical systems, Vol. 2}, 461--499, North-Holland, Amsterdam 2002.

\bibitem{Picone} M. Picone, Su un problema al contorno nelle equazioni differenziali lineari ordinarie del secondo ordine, 
\textit{Ann. Scuola Norm. Sup. Pisa Cl. Sci.}, \textbf{10} (1908), 1--95. 

\bibitem{Stik} A. \v{S}tikonas, A survey on stationary problems, Green's functions and spectrum of Sturm-Liouville problem
with nonlocal boundary conditions, 
\textit{Nonlinear Anal. Model. Control}, \textbf{19} (2014), 301--334. 

\bibitem{wang04}
H. Wang, Positive periodic solutions of functional differential equations, \textit{J. Differential Equations}, \textbf{202}  (2004),  354--366.

\bibitem{jw-na05}
J. R. L. Webb, Optimal constants in a nonlocal boundary value problem, \textit{Nonlinear Anal.}, \textbf{63} (2005),  672--685.

\bibitem{jw-narwa} J. R. L. Webb, Existence of positive solutions for a thermostat model, \textit{Nonlinear Anal. Real World Appl.}, \textbf{13} (2012), 923--938.

\bibitem{jwgi-lms} 
J. R. L. Webb and G. Infante,
Positive solutions of nonlocal boundary value problems: a unified approach, \textit{J. London Math. Soc.}, \textbf{74} (2006), 673--693.

\bibitem{jwgi-nodea-08} J. R. L. Webb and G. Infante, Positive solutions
of nonlocal boundary value problems involving integral conditions, \textit{NoDEA Nonlinear Differential Equations Appl.},  \textbf{15} (2008), 45--67.

\bibitem{Whyburn} 
W. M. Whyburn, Differential equations with general boundary conditions, \textit{Bull. Amer. Math. Soc.}, \textbf{48} (1942), 692--704. 

\bibitem{xuliz}
H.-K. Xu and E. Liz, Boundary value problems for functional differential equations, \textit{Nonlinear Anal.}, \textbf{20} (1993), 971--988. 

\end{thebibliography}
\end{document}